\newtheorem{theorem}{Theorem}[section]
\newtheorem{corollary}[theorem]{Corollary}
\newtheorem{lemma}[theorem]{Lemma}
\newtheorem{proposition}[theorem]{Proposition}
\theoremstyle{definition}
\theoremstyle{remark}
\newtheorem{remark}[theorem]{Remark}
\numberwithin{equation}{section}
\newcommand\Ker{\, {\rm Ker}\, }
\newcommand\supp{{\rm supp}}
\newcommand\dist{{\rm dist}}
\newcommand\spec{{\rm spec}}
\begin{document}

\title{Semiclassical Asymptotics on Manifolds with Boundary}

\author{Nilufer Koldan}
\address{Department of Mathematics, Northeastern University, Boston, MA, USA
} \email{koldan.n@neu.edu}
\thanks{The first author was partially supported by NSF grant
DMS-0600196 and NSF grant DMS-0400426.}

\author{Igor Prokhorenkov}
\address{Department of Mathematics, Texas Christian University, Fort Worth, TX, USA}
\email{i.prokhorenkov@tcu.edu}

\author{Mikhail Shubin}
\address{Department of Mathematics, Northeastern University, Boston, MA, USA
} \email{shubin@neu.edu}
\thanks{The third author was partially supported by NSF grant DMS-0600196.}

\date{}

\keywords{Semiclassical asymptotics, Witten Laplacian, spectrum}

\begin{abstract}
%
We discuss semiclassical asymptotics for the
eigenvalues of the Witten Laplacian
for compact manifolds with boundary in the presence of a general
Riemannian metric. To this end, we modify and use the
variational method suggested by Kordyukov, Mathai and Shubin (2005),
with a more extended use
of quadratic forms instead of the operators. We also utilize  some
important ideas and technical elements from Helffer and Nier (2006),
who were the first to supply a complete proof of the full semi-classical
asymptotic expansions for the eigenvalues with fixed numbers.

\end{abstract}

\maketitle

\section{Introduction}\label{S:intro}

{\bf A.} In his famous paper \cite{W}, E.~Witten introduced a deformation
of the de Rham complex of differential forms on a compact closed
manifold $M$.
It is a new (``deformed") complex which depends upon a given Morse
function $f$  on $M$ and contains a small parameter $h>0$
(``Planck's constant"). The deformed differential is given by the
formula
\begin{equation*}
 d_{h,f}\omega = h e^{-f/h} d
(e^{f/h} \omega) = hd \omega + df \wedge \omega,
\end{equation*}
where $\omega$ is an exterior differential form on $M$, $d_{h,f}^2=0$.
Choosing a Riemannian metric $g$ on $M$, we can take
the corresponding normalized deformed Laplacian

\begin{equation*}
\Delta _{h,f,g} \omega
=h^{-1}(d_{h,f}^* d_{h,f} + d_{h,f} d^*_{h,f})
= h \Delta \omega  +
\left(\mathcal{L}_{\nabla f} + \mathcal{L}_{\nabla f}^{*}\right)\omega
+h^{-1}|\nabla f|^2 \omega,
\end{equation*}
where $\mathcal{L}_{\nabla f}$ is the Lie derivative along $\nabla f$,
the adjoint operators $d^*_{h,f}$,  ${\mathcal L}^*_{\nabla f}$
(to  $d_{h,f}$,  ${\mathcal L}_{\nabla f}$ respectively)
are taken with respect to the scalar products defined by the metric $g$
(and by the corresponding smooth measure on $M$) on the exterior forms on $M$;
$\Delta=d^*d+dd^*$ is the usual Laplacian on forms. The deformed Laplacian
$\Delta_{h,f,g}$ is often called the Witten Laplacian.\\

Multiplication  by $\sqrt{h}\,e^{f/h}$ defines an isomorphism
between the deformed complex with the differential $d_{h,f}$ and
the standard de Rham complex (with the differential $d$). In
particular, the cohomology spaces of these complexes are
isomorphic. By the Hodge theory, these cohomology spaces are
naturally isomorphic to the corresponding spaces of harmonic
forms, i.e.\ the  kernels (null-spaces) of the Laplacians. It
follows that
\begin{equation*}
\dim\Ker \Delta_{h,f,g}^{(p)}=\dim\Ker \Delta^{(p)} = b_p(M),
\end{equation*}
where  $\Delta_{h,f,g}^{(p)}$, $\Delta^{(p)}$ denote the
restrictions of the corresponding Laplacians to $p$-forms,
$b_p(M)$ is the $p$th real Betti number of $M$.\\

An  important feature of the Laplacian $\Delta_{h,f,g}$ is as follows:
for small $h$ the eigenforms corresponding to
the bounded eigenvalues, are small outside a small neighborhood of
the critical points of $f$ because the potential $V= |\nabla f|^2$
does not vanish there, and if eigenforms do not localize around the
critical points, then the term $h^{-1} |\nabla f|^2$ will be larger than
the sum of all other terms in the equation for the eigenfunction,
provided $h$ is sufficiently small. Therefore, we can expect that
only small neighborhoods of the critical points play a role
in semiclassical asymptotics of the eigenvalues; in particular,
we can expect that only ``principal parts" of $f$ and  $g$
are relevant. For example,  we can hope that only quadratic parts of $f$
and constant (flat) metrics at every critical point contribute to the principal term
in the semiclassical asymptotics of the eigenvalues (i.e. asymptotics as $h \rightarrow 0$).\\

Based on this idea, Witten gave an analytic proof of the Morse
inequalities on compact smooth manifolds without boundary. In
their simplest form (see \cite{M}), these inequalities state that
the number $m_p$ of the critical points with index $p$ of a Morse
function $f$ can not be less than the Betti number $b_p$ of the
underlying manifold: $m_p\ge b_p$, for all $p$. Semiclassical
asymptotics of the eigenvalues relate these two numbers by
including both of them into one mathematical object: the Witten
deformation of the de Rham complex, where $m_p$ becomes the number
of small eigenvalues (multiplicity counted) of
$\Delta_{h,f,g}^{(p)}$ and $b_p$ is the multiplicity of the $0$ as
the eigenvalue of the same deformed
Laplacian on $p$-forms. This immediately implies the Morse inequalities above.\\

Rigorous versions of Witten's proof, with additional attention to
details related to the quantum tunneling, appeared in
papers by  B.~Simon \cite{S} (see also the  book \cite{CFKS}),
B.~Helffer and J.~Sj\"{o}strand \cite{HS} and others.\\

{\bf B.} The definition of Morse function $f$ extends to manifolds with
boundary if in addition we assume that $f$ has no critical points
on the boundary and the restriction of $f$ to the boundary is also
Morse. In its simplest form the Morse inequalities state that the
number of critical points of index $p$ of $f$ plus the number of
critical points of index $p-1$ (resp. $p$) of $f|_{\partial M}$
with positive (resp. negative) outward normal derivative is not
smaller than the $p$-th relative (resp. absolute) Betti number of
the underlying manifold. A topological proof of this fact was
obtained by E. Baiada and M. Morse in 1953 in \cite {BM}. For a
modern topological treatment and
generalizations to manifolds with corners see \cite{F}.\\

On manifolds with boundary the Witten Laplacian is defined by the
same formula as in the case without boundary, but now we need to
specify its domain. To obtain a differential complex, it is
natural to choose the domain of the Witten differential $d_{h,f}$
as consisting of the forms with vanishing tangential (or normal)
components on the boundary. This defines the quadratic form of the
corresponding Witten Laplacian, and we will mainly consider
Witthen Laplacian as the operator, defined by the closed quadratic
form. The domain of the Laplacian requires additional
vanishing conditions on the adjoint of the Witten differential (\cite{HN}).\\

K. C. Chang and J. Liu \cite{CL} were the first to use the method
of the Witten Laplacian to give an analytic proof of Morse
inequalities for compact manifolds $M$ with boundary by
considering semiclassical asymptotics of small eigenvalues for the
Witten Laplacian. Following the ideas in \cite{CFKS}, Chang and
Liu only had to study the case when the metric $g$ and the Morse
function $f$ have canonical flat forms near the critical points.
(This is sufficient to prove the Morse inequalities as
a statement in differential topology.)\\

In 2006, B.~Helffer and F.~Nier \cite{HN} found semiclassical
asymptotics of the Witten Laplacian on compact manifolds with boundary
with the general Riemannian metric.
They were mainly interested in
obtaining very accurate asymptotics for the first (exponentially
small) eigenvalue on functions.
 A new feature which appears here is an influence on
the asymptotics of the behavior of the Morse function $f$ near some
critical points of its restriction to the boundary. In particular,
B.~Helffer and F.~Nier had to study the ``rough" localization of
the spectrum of the Witten Laplacian on forms.
\\

In the present paper, in contrast to \cite{CL} and \cite{HN}, we
give a new proof  of the
semiclassical asymptotics for every  eigenvalue of the  Witten
Laplacian with a fixed number (in increasing order) for compact
manifolds with boundary in the presence of a general Riemannian
metric. To this end, we modify a method suggested in \cite{KMS}
(where a similar result with some applications, including a
vanishing result for the Quantum Hall conductivity, was obtained
on regular coverings of compact manifolds without boundary). We
will use some important technical elements from Helffer and Nier,
as well as the technique of model operators, as formulated in \cite{S1}.\\

The purpose of the present paper is to provide  a new method
of establishing semi-classical asymptotics of
any eigenvalue  of Witten's Laplacian in the case of a smooth
compact manifold with smooth boundary. We consider the boundary
conditions obtained by choosing a domain for $d_{h,f}$. Namely, we
take  the domain of the corresponding quadratic form to consist of
all forms of appropriate smoothness which have vanishing
tangential (resp.\ normal) parts on the boundary. In this case
eigenforms with bounded eigenvalues localize around the interior
critical points and only those boundary critical points, i.e.
critical points of $f|_{\partial M}$, which have a positive (resp.
negative) outward normal derivative of $f$. In the spirit  of
\cite{S1} and \cite{S2}, we construct the model operator which is
the direct sum of two parts: one corresponding to the interior
critical points and the other to the boundary critical points, as
specified above. The part of the model operator corresponding to
the interior critical points is the same as for manifolds without
boundary. Namely, we choose coordinates such that the critical
point $x_i$ is the origin. In these coordinates $\Delta_{h,f,g}$
can be written as
\begin{equation*}
\Delta_{h,f,g}=-h A + B +h^{-1}V(x).
\end{equation*}
For each $x_i$ we obtain the model operator on
$H^2(\mathbb{R}^{n}; \Lambda T^*\mathbb{R}^{n})$ by replacing $A$
with its highest order terms with the coefficients frozen at the
critical point, $B$ with $B(x_i)$ and $V$ with its quadratic part
near the critical point. Then we take the direct sum of these
operators over all interior critical points. At the relevant
boundary critical points we construct the model operator in the
same way, but this time for the Witten Laplacian on $\partial M$
with the function $f|_{\partial M}$ and the constant metric $g'$
which is obtained by restricting $g$ to the vectors tangent to
$\partial M$ and then freezing it at the critical point. We prove
that the spectrum of Witten Laplacian that is below an arbitrarily
chosen constant $R$ concentrates around the part of the spectrum
of the model operator that is below the same constant $R$ as $h
\rightarrow 0$. Here we use techniques from  \cite{KMS}. In the
proof, the part corresponding to the interior critical points is
the same as the one in case when there is no boundary. The part
corresponding to the boundary critical points  is harder to treat,
and we use appropriately modified ideas of Helffer
and Nier \cite{HN}.\\

{\bf C.}  In the last 25 years the method of Witten deformation was
successfully applied to prove a number of significant results in
topology and analysis. We provide a very brief review of
literature. We note that our choices are highly subjective and are
influenced by our own interests.\\

In 1982  J.-M. Bismut \cite{B} modified the Witten deformation
technique and combined it with intricate and deep probabilistic
methods to produce a new proof of the degenerate Morse-Bott
inequalities (see \cite{Bo} for topological proof). A more
accessible proof based on the adiabatic technique of
Mazzeo-Melrose and Forman (\cite{MM}, \cite{Fo1}) was given by I.
Prokhorenkov in \cite{P} (see also \cite{HS2} for a different
approaches to the proof).\\

A. V. Pazhitnov \cite{Pa} used the method of Witten deformation to
prove some of the Morse-Novikov inequalities  when the gradient of
Morse function is replaced by a closed 1-form. Novikov
inequalities for vector fields were established by M. A. Shubin in
\cite{S2}. Shubin's results were extended by M. Braverman and M.
Farber \cite{BF1} to the case when 1-form (or corresponding
vector field) has non-isolated zeros, and to the equivariant case in \cite{BF2}.\\

J. Alvarez L\'{o}pez \cite{AL} used the method of Witten to prove
Morse inequalities for the invariant cohomology of the space of
orbits with applications to basic cohomology of Riemann
foliations. V. Belfi, E. Park, and K. Richardson \cite{BPR} used
the Witten deformation of the basic Laplacian to prove an analog
of Hopf index theorem for Riemannian foliations. Further
applications of the method of Witten deformation to index theory
were developed in \cite{PR1} and \cite{PR2}.\\

The method of Witten Laplacian was also used to study the analytic
torsion of the Witten complex. The analytic torsion was introduced
by D. B. Ray and I. M. Singer \cite{RaS}. For odd dimensional
manifolds, D. B. Ray and I. M. Singer conjectured that the
analytic torsion and the Reidemeister torsion coincide.
Independently, J. Cheeger \cite{C} and W. M\"{u}ller \cite{Mu}
have proved this conjecture. The methods of J. Cheeger and W.
M\"{u}ller are both based on a combination of topological and
analytical methods. Then J.M. Bismut and W. Zhang \cite{BZ}
suggested a purely analytical proof of the Cheeger-M\"{u}ller
theorem and generalized it to the case where the metric is not
flat. Later another analytic proof was suggested by D. Burghelea,
L. Friedlander and T. Kappeler \cite{BFK} which was shorter but
based on application of the highly non-trivial Mayer-Vietoris type
formula for the determinant of an elliptic operator. In this paper
they generalize the theorem to the case of manifolds of any
dimension (not necessarily odd).  M. Braverman \cite{BR}
found a short analytic proof by a direct way of analyzing the
behaviour of the determinant of the Witten deformation of the Laplacian.\\

Finally, M. Braverman and V. Silantiev used the method of Witten
deformation to extend Novikov Morse-type inequalities for closed
1-forms $\omega$ to manifolds with boundary in \cite{BS}. In the
paper they require that the form $\omega$ is exact near the
boundary of the manifold and that its critical set satisfies the
condition of F. C. Kirwan (see \cite{K}). The Witten deformation
technique then is used to obtain discrete spectrum and to localize
topological computations to the neighborhood of the critical set
of $\omega $.\\

\section{Preliminaries and the main theorem}\label{S:Pre}

Suppose that $M$ is a $C^{\infty}$ compact manifold,
$\dim_{\mathbb R} M =n$, with $C^{\infty}$ boundary $\partial M$,
and $g$ is a Riemanian metric on $M$. Let $f$ be a Morse function
on $M$, that is, $f$ is a Morse function on $M$ with no critical
points on the
boundary and $f\mid_{\partial M}$ is also a Morse function on $\partial M$.\\

We will use the following notations. The cotangent bundle on $M$
is denoted by $T^*M$, and the  bundle of exterior forms is
$\Lambda T^*M = \oplus_{k=0}^n \Lambda^k T^*M$. The spaces of smooth
sections of the bundles $\Lambda T^*M,\  \Lambda^k T^*M$ are denoted by
$\Lambda (M),\  \Lambda^k(M)$ respectively.
The elements of the sets $\Lambda (M)$ and $\Lambda^k(M)$ are called smooth
differential forms on $M$ and
smooth $k$-forms respectively. Given a manifold $M$ and a metric $g$ on $M$, we
will use the notations $(\cdot , \cdot)_g$ and $\parallel \cdot
\parallel _g$ for the $\textrm{L}^2$-inner product and the $\textrm{L}^2$-norm on $\Lambda (M)$
defined by the metric $g$.\\

It is convenient to use orientations of  $M$ and $\partial M$ (if the orientations exist)
to make global integrals well defined. If there is no orientation,
we should define the integrals as sums of them over small disjoint pieces,
each one located
over a coordinate neighborhood. It is easy to see that every term (hence, every sum)
does not depend upon the choice of orientations. For the sake of simplicity
of notations, we will
always assume both $M$ and $\partial M$ to be oriented.\\

Let $\bar n$ be the outward unit normal vector field defined on
the boundary of $M$ and $\bar n^*$ be its dual $1-$form with
respect to the metric $g$. For any $\omega \in \Lambda^k(M)$,
define the tangential part of $\omega$ as $\mathbf{t}(\omega)(y)=
i_{\bar n(y)}(\bar n^*(y) \wedge \omega(y))$ and the normal part
of $\omega$ as $\mathbf{n}(\omega)(y)= \omega(y) -
\mathbf{t}(\omega)(y)$ for any $y \in \partial M$. So
$\mathbf{t}\omega$ and $\mathbf{n} \omega$ are sections of
$\Lambda T^*M|_{\partial M}$. If $j:
\partial M \hookrightarrow M$ is the inclusion map then $j^*$
defines isomorphism between $\Lambda(\partial M)$ and
$\{\mathbf{t} (\omega): \omega \in \Lambda(M)\}$. By the Sobolev
trace theorem, $\mathbf{t}$ extends by continuity to a linear map
of Sobolev spaces
\begin{equation*}
\mathbf{t}: H^1(M; \Lambda T^*M)\rightarrow H^{1/2}(\partial M;
\Lambda T^*M) \subset L^2(\partial M; \Lambda T^*M),
\end{equation*}
and so does ${\bf n}$.\\

The Witten deformation of the exterior derivative is defined by
\begin{equation*}
    d_{h,f}=e^{-f/h}hde^{f/h}=hd+df \wedge,
\end{equation*}
where $h>0$ is a parameter
The adjoint of $d_{h,f}$ with respect to the
$\textrm{L}^2$-inner product $(\cdot , \cdot)_g$ is
\begin{equation*}
    d^{*}_{h,f,g}=e^{f/h}hd^{*}e^{-f/h}=hd^*+i_{\nabla f}.
\end{equation*}
Now define a quadratic form which
is the closure of
\begin{eqnarray}\label{E:MainQuad}
Q_{h,f,g}(\omega)&=&\frac{1}{h}\left(\parallel d_{h,f} \omega
\parallel_{g} ^2 +\parallel d^{*}_{h,f,g} \omega
\parallel_{g} ^2\right)
\end{eqnarray}
with the domain $\{\omega \in C^{\infty}(M; \Lambda T^*M):
\mathbf{t}(\omega)=0\}$. The same notation will be used for the
closure of this form. The closure is well defined and its domain
is
\begin{equation} \label{E:QDomain}
D(Q_{h,f,g})= \{\omega \in H^1(M; \Lambda T^*M) :
\mathbf{t}(\omega)=0\} \end{equation}
where $H^1$ denotes the Sobolev space of forms.\\

The quadratic form $Q_{h,f,g}$ can be written as
\begin{eqnarray}\label{E:MainQuad2}
Q_{h,f,g}(\omega)&=&h\left(\Delta \omega , \omega
\right)_{g}+\left((\mathcal{L}_{\nabla f} + \mathcal{L}_{\nabla
f}^{*}) \omega , \omega \right)_{g}+h^{-1}\left(|\nabla f|^2
\omega ,\omega \right)_{g}\\
 &&+ \int_{\partial M}(\mathbf{t} \bar\omega) \wedge (\star \mathbf{n}d_{h,f} \omega)
 - \int_{\partial M}(\mathbf{t} d^*_{h,f,g} \omega) \wedge (\star \mathbf{n} \bar\omega). \notag
\end{eqnarray}
(see (2.12) in \cite{HN}). In this formula $\mathcal{L}_{\nabla
f}$ and $\mathcal{L}^{*}_{\nabla f}$ denote the Lie derivative in
the direction of $\nabla f$ and its $\textrm{L}^2$-adjoint, and
$\star $ is the Hodge operator.\\

The Witten Laplacian is the elliptic self-adjoint operator associated with
the closed quadratic form $Q_{h,f,g}$ by the Friedrichs
construction (\cite{RS}, vol.1, section VIII.6). It is the
operator defined by
\begin{equation}\label{E:MainOp}
\Delta _{h,f,g} \omega= h \Delta \omega  +
\left(\mathcal{L}_{\nabla f} + \mathcal{L}_{\nabla f}^{*}\right)
\omega +h^{-1}|\nabla f|^2 \omega
\end{equation}
with the domain $D(\Delta _{h,f,g})= \{\omega \in H^2(M; \Lambda
T^*M) :~ \mathbf{t}(\omega)=0 ,~ \mathbf{t}(d^*_{h,f,g}
\omega) = 0 \}$ where $H^2$ is the Sobolev space of the corresponding sections.\\

On its domain, $Q_{h,f,g}$ can be also written as
\begin{eqnarray*}
Q_{h,f,g}(\omega) = &h\left(\parallel d \omega
\parallel_{g} ^2 +\parallel d^{*} \omega
\parallel_{g} ^2\right)+\left((\mathcal{L}_{\nabla f} +
\mathcal{L}_{\nabla f}^{*}) \omega , \omega \right)_{g}
+h^{-1}\left(|\nabla f|^2 \omega ,\omega \right)_{g}\\
 &-\displaystyle\int_{\partial M}\frac{\partial f}{\partial \bar n} (x) < \omega,
\omega >_g(x)~ d\mu_{\partial M}(x). \notag
\end{eqnarray*}
(see (2.15) in \cite{HN}) where $< \omega, \omega
>_g(x)$ is the inner product on $\Lambda T_x^*M$ induced by the metric $g$
(see p. 226-227 \cite{CFKS}) and $\mu_{\partial M}$ is the measure
on $\partial M$ defined by the metric $g$.\\

\textbf{Definition:} \textit{The set of critical points} is a
disjoint union of the sets $S_0$, $S_+$ and $S_-$ where $S_0$ is
the set of interior points which are critical points of the Morse
function $f$, $S_+$ and $S_-$ are the sets of boundary points
which are critical points of $f|_{\partial M}$ with
$\frac{\partial f}{\partial \bar n} > 0$ and $\frac{\partial
f}{\partial \bar n} < 0$ respectively. Let $S = S_0 \cup S_+$.\\

If $y \in \partial M$ then $y \in S_{\pm}$ if and only if $\frac{\partial
f}{\partial \bar n}(y) = \pm|\nabla f(y)|$ respectively.\\

Assume that there exist $N_0$ interior critical points denoted by
$\{\bar{x}_1, ... , \bar{x}_{N_0}\}$ and $N$ points in $S_+$
denoted by $\{\bar{y}_1,..., \bar{y}_N\}$. In Section
\ref{S:MainThmAndProof}, we will prove that
these points are the critical points we need to consider.\\

Now we want to form the model operator for the Witten Laplacian
$\Delta _{h,f,g}$ which provides the best approximation of $\Delta _{h,f,g}$
by a direct sum of harmonic oscillators near the critical
set $S_{0}$ .\\

For each critical point $\bar x_i \in S_0$, we define the operator $\Delta_i$  on
$H^2(\mathbb{R}^{n}; \Lambda T^*\mathbb{R}^{n})$ to be
\begin{equation*}
\Delta_i = -h A_{i} + B_{i} +h^{-1}V_{i}(x),
\end{equation*}
where the operator $-A_{i}$ is the principal part of the Laplacian
$\Delta$ at $\bar x_{i}$. It is an elliptic second order
differential operator with constant coefficients. Operator $B_{i}$
is the value at $\bar x_{i}$ of the bounded self-adjoint zero
order operator $\mathcal{L}_{\nabla f} + \mathcal{L}_{\nabla
f}^{*}$. Finally, $V_{i}(x)$ is the quadratic part of the
potential $|\nabla f|^{2}$ near $\bar x_{i}$. In local coordinates
$x_1, ... , x_n$ near $\bar x_i$, let $g_i=g(\bar x_i)$, then
\begin{equation*}
A_{i}=\sum_{l,k=1}^{n} g_i^{lk}\frac{\partial^2}{\partial x_l
\partial x_k}~,
\end{equation*}
\begin{equation*}
B_{i} = \sum_{k,r=1}^{n} \frac{\partial^2 f}{\partial
x_{r}\partial x_{k}}(\bar{x}_i)~ (a^{*k} a^{r}-a^r a^{*k})
\end{equation*} and
\begin{equation*}
V_{i}(x)=\sum_{l,k,r,s=1}^{n} g_i^{rs} \frac{\partial^2 f}{\partial
x_{r}\partial x_{l}}(\bar{x}_i) \frac{\partial^2 f}{\partial x_{s}
\partial x_{k}}(\bar{x}_i) x_l x_k
\end{equation*}
where $a^k=(dx_{k}\wedge)^{*} $ and $a^{*k}=(a^{k})^{*}=dx_{k}\wedge$ are the fermionic annihilation
and creation operators. Note that at the interior critical points, the model
operator is the same as in the model operator for manifolds without boundary (see \cite{S1}).\\

For each boundary critical point $\bar{y}_j \in S_+$,in a small neighborhood of $\bar{y}_j$ let
$f_j'=f\mid_{\partial M}$ and $g_j'$ be the metric obtained by
restricting $g$ on the tangential vectors and freezing it at the
critical point. Now define the operator $\Delta_j$ on
$H^2(\mathbb{R}^{n-1}; \Lambda T^*\mathbb{R}^{n-1})$ as
\begin{equation*}
\Delta_j = -h A_{j}' + B_{j}' +h^{-1}V_{j}'(x)
\end{equation*}
where we define operators $A_{j}'$, $B_{j}'$, and  $V_{j}'$ as
before by considering $\partial M$ as a manifold without boundary.
In local coordinates $x_1, ... , x_{n-1}$ for $\partial M$ near
$\bar y_j$ we have
\begin{equation*}
A_{j}'=\sum_{l,k=1}^{n-1} (g_j')^{lk}\frac{\partial^2}{\partial x_l
\partial x_k}~,
\end{equation*}
\begin{equation*}
B_{j}' = \sum_{k,r=1}^{n-1}  \frac{\partial^2
f_j'}{\partial x_{r}\partial x_{k}}(\bar{y}_j)~ (a^{*k} a^{r}-a^r
a^{*k})
\end{equation*} and
\begin{equation*}
V_{j}'(x)=\sum_{l,k,r,s=1}^{n-1} (g_j')^{rs} \frac{\partial^2
f_j'}{\partial x_{r}\partial x_{l}}(\bar{y}_j) \frac{\partial^2
f_j'}{\partial x_{s} \partial x_{k}}(\bar{y}_j) x_l x_k.
\end{equation*}\\

The \textit{model operator} is defined by
\begin{equation}\label{E:ModelOp}
\Delta_{\textrm{mod}} = \left( \oplus_{i=1}^{N_0} \Delta_{i} \right) \oplus
\left( \oplus_{j=1}^{N} \Delta_{j} \right).
\end{equation}
The model operator does not depend on the choice of local coordinates on $M$.\\

It follows from the general theory of elliptic operators on manifolds
with boundary that the operator $\Delta _{h,f,g}$ has discrete spectrum with
eigenvalues
\begin{equation*}
\lambda_1(h) \leq \lambda_2(h) \leq \lambda_3(h) \leq~...
\end{equation*}
such that $\lambda_l(h) \rightarrow \infty$ as $l \rightarrow
\infty$
for each $h>0$.\\

The spectrum of the model operator $\Delta_{\textrm{mod}}$ is also
discrete and the eigenvalues are independent of $h$ \cite{S1}. We list
all elements of the spectrum of $\Delta_{\textrm{mod}}$ in the
increasing order as
\begin{equation*}
\mu_1 \leq \mu_2 \leq \mu_3 \leq~...
\end{equation*}
such that $\mu_l \rightarrow \infty$ as $l \rightarrow \infty$.\\

We will prove that up to any fixed real number $R\notin$ ${\rm
spec}\,(\Delta_{\textrm{mod}})$, the spectrum of the operator
$\Delta_{h,f,g}$ concentrates near the spectrum of the model
operator $\Delta_{\textrm{mod}}$ as $h \rightarrow 0$. More
precisely, our main result is

\begin{theorem}\label{T:SpecClose} For every positive number $R
\notin$ ${\rm spec}\,(\Delta_{\rm{mod}})$ there exist $M>0$,
$h_0>0$ and $C>0$ such that both $\Delta_{\rm{mod}}$ and
$\Delta_{h,f,g}$ have exactly $M$ eigenvalues less than $R$ and
\begin{equation*}
|\lambda_l(h) - \mu_l| \leq C h^{\frac{1}{5}} ,\quad l=1,2,...,M,
\quad h\in (0, h_0).
\end{equation*}
\end{theorem}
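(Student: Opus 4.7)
The plan is to deduce Theorem \ref{T:SpecClose} from a two-sided variational comparison between the closed quadratic form $Q_{h,f,g}$ and a direct-sum quadratic form $Q_{\rm mod}$ built from the summands of $\Delta_{\rm mod}$. The upper bound $\lambda_l(h) \leq \mu_l + C h^{1/5}$ comes from constructing $M$ explicit trial forms from eigenforms of $\Delta_{\rm mod}$ and applying min-max, while the lower bound $\lambda_l(h) \geq \mu_l - C h^{1/5}$ comes from an IMS-type localization that bounds $Q_{h,f,g}$ from below by a model form plus an error of size $O(h^{1/5})$. The exponent $1/5$ arises from choosing a localization scale $h^{2/5}$, which balances the IMS error $h \cdot (h^{-2/5})^2 = h^{1/5}$ against the cubic Taylor-remainder error $h^{-1} \cdot (h^{2/5})^3 = h^{1/5}$ coming from replacing $|\nabla f|^2$ by its quadratic part on a ball of radius $h^{2/5}$.

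For the upper bound, near each interior critical point $\bar x_i \in S_0$ pick an eigenform $u_{i,k}$ of $\Delta_i$ with eigenvalue $\mu \leq R$ and set
\[
\phi_{i,k}(x) = \chi\bigl((x-\bar x_i)/h^{2/5}\bigr)\, u_{i,k}\bigl((x-\bar x_i)/h^{1/2}\bigr),
\]
where $\chi \in C_c^\infty$ equals $1$ near $0$. The substitution $y = (x-\bar x_i)/h^{1/2}$ converts $Q_{h,f,g}$ on this piece into the quadratic form of $\Delta_i$ acting on $u_{i,k}$, modulo perturbations of order $h^{1/2}$ from the Taylor remainder of the potential and from the variable metric, and modulo an exponentially small cutoff error coming from the Gaussian decay of $u_{i,k}$ at scale $h^{2/5} \gg h^{1/2}$. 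Near each boundary critical point $\bar y_j \in S_+$ I would build $\phi_{j,k}$ as the tensor product of an eigenform of $\Delta_j$ on $\Lambda T^*\mathbb R^{n-1}$ with the exponential $e^{-(\partial f/\partial \bar n)(\bar y_j)\, t/h}$ in the inward normal coordinate $t$, multiplied by a suitable form factor enforcing $\mathbf{t}\phi_{j,k}=0$; since $\partial f/\partial \bar n > 0$ on $S_+$ this factor gives $L^2$-concentration at the boundary. Cutting off at scale $h^{2/5}$ tangentially and $O(h)$ normally, checking near-orthogonality in $L^2$, and applying min-max to the $M$-dimensional span of these trial forms yields the upper bound.

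For the lower bound, choose a smooth partition of unity $1 = \sum_k \chi_k^2$ in which each $\chi_k$ for $k \geq 1$ is supported in a ball of radius $\sim h^{2/5}$ about one point of $S_0 \cup S_+ \cup S_-$ and $\chi_0$ vanishes on a neighborhood of every critical point. Expanding $Q_{h,f,g}(\chi_k\omega)$ using $d_{h,f}(\chi_k\omega) = \chi_k d_{h,f}\omega + h(d\chi_k)\wedge\omega$ (and the analogous formula for $d^*_{h,f,g}$), the cross terms telescope against $\sum_k d(\chi_k^2) = 0$ and yield the IMS-type identity
\[
Q_{h,f,g}(\omega) = \sum_k Q_{h,f,g}(\chi_k \omega) - h \sum_k \bigl(\|(d\chi_k)\wedge\omega\|_g^2 + \|i_{\nabla\chi_k}\omega\|_g^2\bigr),
\]
whose error term is $O(h^{1/5})\|\omega\|_g^2$. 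On $\supp\chi_0$ one has $|\nabla f|^2 \geq c > 0$, so $Q_{h,f,g}(\chi_0 \omega) \geq c h^{-1}\|\chi_0 \omega\|_g^2$. Near a point of $S_-$ the Dirichlet condition $\mathbf{t}\omega=0$ together with the boundary integral $-\int_{\partial M}(\partial f/\partial \bar n)\langle\omega,\omega\rangle\, d\mu_{\partial M}$ (which is nonnegative there) again force $Q_{h,f,g}(\chi_k \omega) \geq ch^{-1}\|\chi_k \omega\|_g^2$, so neither piece contributes eigenvalues below $R$ for small $h$. For $\chi_k$ centered at $\bar x_i \in S_0$ or $\bar y_j \in S_+$, rescaling and Taylor-expanding give $Q_{h,f,g}(\chi_k \omega) \geq Q_{{\rm mod},k}(\chi_k \omega) - Ch^{1/5}\|\chi_k \omega\|_g^2$. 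Combining these estimates via the IMS identity and using min-max against the low-lying spectral subspace of $\Delta_{h,f,g}$ yields $\lambda_l(h) \geq \mu_l - Ch^{1/5}$; since $R \notin \spec\Delta_{\rm mod}$, the gap around $R$ is positive, so for small $h$ the two-sided bound forces the counts of eigenvalues of $\Delta_{h,f,g}$ and $\Delta_{\rm mod}$ below $R$ to agree.

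The hard part will be the analysis near boundary critical points. One must simultaneously control (a) the tangential Dirichlet condition $\mathbf{t}\omega = 0$ and its compatibility with a model operator that lives on the boundaryless space $\mathbb R^{n-1}$; (b) the sign of the boundary integral in \eqref{E:MainQuad2}, which helps localize at $S_+$ but must be absorbed by the Dirichlet condition at $S_-$; and (c) the different tangential and normal length scales near $\bar y_j$, coming from the exponential factor $e^{-(\partial f/\partial \bar n)\, t/h}$ of scale $h$ coexisting with the Gaussian tangential scale $h^{1/2}$. For these steps I would adapt the quasimode construction and the rough spectral localization near the boundary from \cite{HN}, keeping the whole argument at the level of quadratic forms as in \cite{KMS} so that the uniform error $O(h^{1/5})$ is preserved throughout.
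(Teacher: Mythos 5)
Your overall strategy is sound and recovers the same analytic content as the paper: you choose a localization scale $h^{\kappa}$ with $\kappa=2/5$ so that the IMS error $O(h^{1-2\kappa})$ and the Taylor-remainder error $O(h^{3\kappa-1})$ are both $O(h^{1/5})$, you build boundary quasimodes by wedging model eigenforms on $\mathbb R^{n-1}$ with a decaying exponential in the normal variable, and you use the sign of $\partial f/\partial\bar n$ to decide whether a boundary critical point contributes. The paper achieves the same thing by routing everything through the abstract projection-equivalence Theorem~\ref{t:equivalence}, verifying inequalities \eqref{e:14}--\eqref{e:A1A2} via the maps $i_2,p_2$ that wedge with (and pair against) the normalized form $\alpha=C(h)\phi_n^{(h)}(x_n)e^{x_n/h}dx_n$; your ``direct min-max with explicit quasimodes'' is a less structured packaging of the same estimates. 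In particular, the paper's $\beta_l=1+O(h^\kappa)$, $\varepsilon_l=O(h^{3\kappa-1})$, $\gamma_l=O(h^{1-2\kappa})$ are exactly the multiplicative, Taylor, and IMS errors in your sketch.

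The genuine gap is in the lower bound near $\bar y_j\in S_+$. You write $Q_{h,f,g}(\chi_k\omega)\geq Q_{{\rm mod},k}(\chi_k\omega)-Ch^{1/5}\|\chi_k\omega\|^2$, but the left side is a quadratic form on forms in an $n$-dimensional collar while $Q_{{\rm mod},k}$ acts on $\Lambda T^*\mathbb R^{n-1}$; as written the inequality does not type-check, and the reduction of dimension is precisely the hard step. What is actually needed is the following mechanism, which you gesture at but do not supply: decompose $\chi_k\omega$ near $\bar y_j$ in the special coordinates \eqref{E:f}--\eqref{E:g} as $\alpha\wedge\omega'(x')$ plus its $L^2$-orthogonal complement in the normal factor; show that on the complement $Q_{h,f,g}\geq ch^{-1}\|\cdot\|^2$ --- this is \emph{not} a consequence of $|\nabla f|^2\geq c>0$ alone (the negative boundary term in \eqref{E:MainQuad2} competes at the same order $h^{-1}$), but rather follows from the fact that the spectrum of the one-dimensional normal Witten form $Q_{h,x_n,g_{nn}(0)dx_n^2}$ on $H^1(\mathbb R_-,\Lambda T^*\mathbb R_-)$ is $\{0\}\cup[ch^{-1},\infty)$ with ground state $e^{x_n/h}dx_n$; and then separate variables on $\alpha\wedge\omega'$ to compare with $Q_j(\omega')$. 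This requires first putting $f$ in the form $x_n+f'(x')$ and $g$ in the form $g_{nn}dx_n^2+g'$, freezing $g_{nn}$ along the normal, and passing through intermediate metrics with controlled errors --- the content of the paper's Lemmas~\ref{L:Const}, \ref{L: n-1 part}, and \ref{L:realalpha}. Citing \cite{HN} does not discharge this, since the adaptation of their estimates to the form-level comparison at the stated $h^{1/5}$ rate is exactly what must be done.

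Two smaller corrections. First, cutting the quasimode off at normal scale ``$O(h)$'' as you propose makes the normal IMS error $O(1)$, not $o(1)$: if the cutoff occurs at $C_0h$ for fixed $C_0$, the ratio of the localization error to $\|\phi_{j,k}\|^2$ is of order $C_0^{-1}e^{-2C_0}$, bounded away from $0$. You must cut at a scale $\gg h$, e.g.\ at $h^{2/5}$ as the paper's $\phi_n^{(h)}$ does, so the exponential is superexponentially small at the cutting region. Second, the ansatz $e^{-(\partial f/\partial\bar n)(\bar y_j)\,t/h}$ for the normal ground state is only correct after the change of coordinates that makes $f=x_n+f'(x')$ (and kills the mixed $dx_n\,dx_l$ metric components); the existence of these coordinates is a nontrivial preliminary step (see \eqref{E:f}, \eqref{E:g} and the references given after them) and should be stated explicitly before the quasimode is written down.
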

\bigskip

\begin{remark} One can replace the tangential boundary conditions
\eqref{E:QDomain} for the quadratic form $Q_{h,f,g}$ with the normal
boundary conditions
\begin{equation}
D(Q_{h,f,g})= \{\omega \in H^1(M; \Lambda T^*M) :
\mathbf{n}(\omega)=0\}. \end{equation}
The corresponding domain for the Witten Laplacian $\Delta _{h,f,g}$
is  $D(\Delta _{h,f,g})= \{\omega \in H^2(M; \Lambda
T^*M) :~ \mathbf{n}(\omega)=0 ,~ \star \mathbf{n}(d_{h,f}
\omega) = 0 \}$. The case of the normal boundary conditions can be
reduced to one of tangential boundary conditions by observing that
the Hodge $\star$ operator maps the space $\{\omega \in H^2(M; \Lambda
T^*M) :~ \mathbf{n}(\omega)=0 ,~ \star \mathbf{n}(d_{h,f}
\omega) = 0 \}$ to the space $\{\omega \in H^2(M; \Lambda
T^*M) :~ \mathbf{t}(\omega)=0 ,~ \mathbf{t}(d^*_{h,(-f),g}
\omega) = 0 \}$, and that $\star \Delta _{h,f,g}=\Delta _{h,(-f),g}\star $.
\end{remark}

We will prove our main result by comparing the Witten Laplacian to the
model operator in a way suggested in Theorem 2.1 in \cite{KMS}.
We will give the abstract setting, which emphasizes the use of
quadratic forms instead of operators, in
the next section and proofs can be found in the appendix.\\

\section{General results on equivalence of
projections}\label{s:abstract-equiv}

Consider Hilbert spaces ${\mathcal H}_1$ and ${\mathcal H}_2$
equipped with inner products $(\cdot,\cdot)_1$ and
$(\cdot,\cdot)_2$. Let $Q_1$ and $Q_2$ be closed bounded
below quadratic forms with dense domains $D(Q_1) \subset
\mathcal{H}_1$ and $D(Q_2) \subset \mathcal{H}_2$ respectively.
Let $A_1$ and $A_2$ be the self-adjoint operators corresponding to
the quadratic forms.\\

Let us take $\lambda_{01}, \lambda_{02} \leq 0$ such that
\begin{equation}\label{e:6}
Q_l(\omega) \geq \lambda_{0l} \|\omega\|_l^2, \quad \omega \in
D(Q_l),\quad l=1,2.
\end{equation}\\

Let ${\mathcal H}_0$ be a Hilbert space, equipped with injective
bounded linear maps $i_1 : {\mathcal H}_0 \to {\mathcal H}_1$
and $i_2 : {\mathcal H}_0 \to {\mathcal H}_2$. Assume that there
are given bounded linear maps $p_1:{\mathcal H}_1\to {\mathcal
H}_0$ and $p_2:{\mathcal H}_2\to {\mathcal H}_0$ such that
$p_1\circ i_1={\rm id}_{{\mathcal H}_0}$ and $p_2\circ i_2={\rm
id}_{{\mathcal H}_0}$, as in the following diagram:

\begin{equation*}
\xymatrix @=4pc @ur { \mathcal H_1 \ar@<1ex>[r]^{p_1} & \mathcal
H_0\ar@<-1ex>[d]_{i_2} \ar@<1ex>[l]^{i_1} \\  & \mathcal H_2
\ar@<-1ex>[u]_{p_2} }
\end{equation*}

Let $J$ be a self-adjoint bounded operator  in ${\mathcal H}_0$.
Assume that $(i_2Jp_1)^*=i_1Jp_2$.\\

Since the operators $i_l:{\mathcal H}_0\to {\mathcal H}_l,~
l=1,2,$ are bounded and have bounded left-inverse operators $p_l$,
they are topological monomorphisms, i.e. they have closed images
and the maps $i_l:{\mathcal H}_0\to {\rm Im}\, i_l$ are
topological isomorphisms. Therefore, we can assume that the
estimate
\begin{equation}\label{e:rho}
\rho^{-1}\|i_2J\omega\|_2\leq \|i_1J\omega\|_1\leq
\rho\|i_2J\omega\|_2, \quad \omega\in {\mathcal H}_0,
\end{equation}
holds with some $\rho>1$. (Although we can choose the constant
$\rho$ in the estimate \eqref{e:rho} to be independent of $J$, it
may be possible to choose $\rho$ closer to 1, due to the presence of $J$.)\\

Define the bounded operators $J_l$ in ${\mathcal H}_l,~ l=1,2,$ by
the formula $J_l=i_lJp_l$. We assume that
\begin{itemize}
\item the operator $J_l,~ l=1,2,$ maps the domain of $Q_l$ to
itself;

\item $J_l$ is self-adjoint, and $0\leq J_l\leq {\rm
id}_{{\mathcal H}_l},~ l=1,2$;

\item for $\omega\in {\mathcal H}_0$, $i_1J\omega\in D(Q_1)$ if
and only if $i_2J\omega\in D(Q_2)$.
\end{itemize}
Denote $D=\{\omega\in {\mathcal H}_0 : i_1J\omega\in
D(Q_1)\}=\{\omega\in
{\mathcal H}_0 : i_2J\omega\in D(Q_2)\}.$\\

Introduce a self-adjoint positive bounded linear operator $J'_l$
in ${\mathcal H}_l$ by the formula $J_l^2+{J'_l}{}^2={\rm
id}_{{\mathcal H}_l}$. We assume that
\begin{itemize}
\item the operator $J'_l,~ l=1,2,$ maps the domain of $Q_l$ to
itself;

\item the quadratic forms $Q_l(\omega)-(Q_l(J_l \omega) + Q_l(J_l'
\omega))$ are bounded i.e.
\begin{equation}\label{e:15}
Q_l(J_l \omega) + Q_l(J_l' \omega)-Q_l(\omega) \leq \gamma_{l}
\|\omega\|_l^2, \quad \omega \in D(Q_l),\quad l=1,2.
\end{equation}
\end{itemize}

Finally, we assume that
\begin{equation}\label{e:14}
Q_l(J'_l\omega)\geq \alpha_l \|J'_l\omega\|_l^2,\quad \omega\in
D(Q_l), \quad l=1,2,
\end{equation}
for some $\alpha_l>0$, and
\begin{align}\label{e:16}
Q_2(i_2J\omega)\leq \beta_1 Q_1(i_1J\omega)
+\varepsilon_1\|i_1J\omega\|_1^2, \quad \omega\in
D,\\
\label{e:A1A2} Q_1(i_1J\omega)\leq \beta_2 Q_2(i_2J\omega)
+\varepsilon_2\|i_2J\omega\|_2^2, \quad \omega\in D,
\end{align}
for some $\beta_1,\beta_2\geq 1$ and $\varepsilon_1,
\varepsilon_2>0$.\\

Denote by $E_l(\lambda),~ l=1,2$, the spectral projection of the
operators $A_l$, corresponding to the semi-axis
$(-\infty,\lambda]$.\\

\begin{theorem}\label{t:equivalence}
Under the assumptions in this section, let $b_1>a_1$ and
\begin{align}\label{e:a2}
a_2&=\rho\left[ \beta_1 \left(a_1+\gamma_1+
\frac{(a_1+\gamma_1-\lambda_{01})^2}{\alpha_1-a_1-\gamma_1}\right)+
\varepsilon_1\right],\\ \label{e:b2}
b_2&=\frac{\beta_2^{-1}(b_1\rho^{-1}-\varepsilon_2)(\alpha_2-\gamma_2)
-\alpha_2\gamma_2+2\lambda_{02}\gamma_2-\lambda^2_{02}}
{\alpha_2-2\lambda_{02}+\beta_2^{-1}(b_1\rho^{-1}-\varepsilon_2)}.
\end{align}
Suppose that $\alpha_1>a_1+\gamma_1$, $\alpha_2>b_2+\gamma_2$ and
$b_2>a_2$. If the interval $(a_1,b_1)$ does not intersect with the
spectrum of $A_1$, then:

(1) the interval $(a_2,b_2)$ does not intersect with the spectrum
of $A_2$;

(2)The operator $E_2(\lambda_2)i_2Jp_1E_1(\lambda_1): {\rm Im}
E_1(\lambda_1)\to {\rm Im} E_2(\lambda_2)$ is an isomorphism for
any $\lambda_1 \in (a_1, b_1)$ and $\lambda_2 \in (a_2, b_2)$.
\end{theorem}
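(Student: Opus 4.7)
I plan to use the pair of cross maps $J_{12}:=i_2Jp_1:\mathcal{H}_1\to\mathcal{H}_2$ and $J_{21}:=i_1Jp_2:\mathcal{H}_2\to\mathcal{H}_1$, which are mutual adjoints by the hypothesis $(i_2Jp_1)^*=i_1Jp_2$, as near-isometries between the low-lying spectral subspaces of $A_1$ and $A_2$. The IMS-type bound \eqref{e:15} together with the coercivity \eqref{e:14} and the form bound \eqref{e:6} will show that $J_\ell$ acts as ``almost the identity'' on any vector whose $Q_\ell$-energy stays below the coercivity threshold, so that $J_{12}$ and $J_{21}$ are injective on such subspaces and approximately preserve the quadratic forms. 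Dimension counting then converts the assumed gap $(a_1,b_1)$ in the spectrum of $A_1$ into the claimed gap $(a_2,b_2)$ for $A_2$, after which the isomorphism claim in (2) is essentially automatic.

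\textbf{Low-energy transfer.} Take $\omega\in D(Q_1)$ with $Q_1(\omega)\le a_1\|\omega\|_1^2$. Adding the IMS bound \eqref{e:15} and the coercivity \eqref{e:14} gives
\[
Q_1(J_1\omega)+\alpha_1\|J_1'\omega\|_1^2\le (a_1+\gamma_1)\|\omega\|_1^2.
\]
Combining this with the form bound $Q_1(J_1'\omega)\ge \lambda_{01}\|J_1'\omega\|_1^2$ from \eqref{e:6} and the identity $\|\omega\|_1^2=\|J_1\omega\|_1^2+\|J_1'\omega\|_1^2$ (which follows from $J_1^2+(J_1')^2=\mathrm{id}$), and using the hypothesis $\alpha_1>a_1+\gamma_1$, a short manipulation gives both a smallness estimate $\|J_1'\omega\|_1^2\le \frac{a_1+\gamma_1-\lambda_{01}}{\alpha_1-a_1-\gamma_1}\|J_1\omega\|_1^2$ and
\[
Q_1(J_1\omega)\le \left(a_1+\gamma_1+\frac{(a_1+\gamma_1-\lambda_{01})^2}{\alpha_1-a_1-\gamma_1}\right)\|J_1\omega\|_1^2.
\]
Feeding these into \eqref{e:16} and using \eqref{e:rho} to pass from $\|J_1\omega\|_1$ to $\|J_{12}\omega\|_2$ on the right yields $Q_2(J_{12}\omega)\le a_2\|J_{12}\omega\|_2^2$ with $a_2$ as in \eqref{e:a2}. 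The smallness estimate forces $J_1\omega\neq 0$ for $\omega\neq 0$, and \eqref{e:rho} then forces $J_{12}\omega\neq 0$, so $J_{12}$ restricted to $E_1(a_1)\mathcal{H}_1$ is injective, and the min-max principle yields $\dim E_1(a_1)\mathcal{H}_1\le \dim E_2(a_2)\mathcal{H}_2$.

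\textbf{Reverse direction and the gap.} Running the same computation with indices interchanged, applied to $\eta\in D(Q_2)$ with $Q_2(\eta)\le \lambda\|\eta\|_2^2$ for $\lambda\in(a_2,b_2)$, and using \eqref{e:A1A2} together with the hypothesis $\alpha_2>b_2+\gamma_2$, yields $Q_1(J_{21}\eta)\le \Lambda(\lambda)\|J_{21}\eta\|_1^2$ where $\Lambda$ is an increasing function of $\lambda$ of the same structural form as the bound above but built from the index-$2$ constants. The formula \eqref{e:b2} for $b_2$ is precisely the solution of the linear-in-$\lambda$ equation $\Lambda(\lambda)=b_1$, obtained after clearing the denominator $\alpha_2-\lambda-\gamma_2$; hence $\lambda<b_2$ implies $\Lambda(\lambda)<b_1$. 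Since $(a_1,b_1)$ contains no spectrum of $A_1$, $E_1(\lambda')\mathcal{H}_1=E_1(a_1)\mathcal{H}_1$ for every $\lambda'\in(a_1,b_1)$, and $J_{21}$ therefore maps $E_2(\lambda)\mathcal{H}_2$ injectively (by the analogue of the smallness estimate) into $E_1(a_1)\mathcal{H}_1$. Consequently
\[
\dim E_2(\lambda)\mathcal{H}_2\le \dim E_1(a_1)\mathcal{H}_1\le \dim E_2(a_2)\mathcal{H}_2\le \dim E_2(\lambda)\mathcal{H}_2
\]
for every $\lambda\in(a_2,b_2)$, and equality throughout gives $(a_2,b_2)\cap \spec(A_2)=\emptyset$, proving (1). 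For (2), fix $\lambda_1\in(a_1,b_1)$ and $\lambda_2\in(a_2,b_2)$: the two spectral subspaces have the common dimension just computed, and the low-energy transfer places $J_{12}(E_1(\lambda_1)\mathcal{H}_1)\subseteq E_2(a_2)\mathcal{H}_2\subseteq E_2(\lambda_2)\mathcal{H}_2$, so $E_2(\lambda_2)i_2Jp_1E_1(\lambda_1)$ coincides with $J_{12}|_{E_1(\lambda_1)\mathcal{H}_1}$, an injection between finite-dimensional spaces of equal dimension, hence an isomorphism.

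\textbf{Main obstacle.} The technical heart is the IMS step, i.e.\ converting \eqref{e:15}, \eqref{e:14} and \eqref{e:6} into a two-sided comparison between $\|\omega\|$ and $\|J_\ell\omega\|$ on the low-energy subspace; this relies essentially on the strict inequalities $\alpha_1>a_1+\gamma_1$ and $\alpha_2>b_2+\gamma_2$. The only genuinely delicate bookkeeping is in the reverse direction: verifying that the rather opaque formula \eqref{e:b2} really is the unique solution of $\Lambda(\lambda)=b_1$, so that the dimension sandwich closes up cleanly. Everything else is a direct application of the IMS technique together with the transfer bounds \eqref{e:16}, \eqref{e:A1A2} and the distortion bound \eqref{e:rho}.
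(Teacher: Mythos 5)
Your transfer estimates (the IMS bound, the coercivity estimate, and the $\rho$/$\beta$/$\varepsilon$ chain) are assembled correctly and mirror what the paper does in the appendix. However, there is a genuine gap in the way you convert Rayleigh-quotient bounds into spectral-subspace membership. The inequalities $Q_2(J_{12}\omega)\le a_2\|J_{12}\omega\|_2^2$ and $Q_1(J_{21}\eta)\le\Lambda(\lambda)\|J_{21}\eta\|_1^2$ do \emph{not} imply $J_{12}\omega\in E_2(a_2)\mathcal{H}_2$ or $J_{21}\eta\in E_1(a_1)\mathcal{H}_1$: even across a spectral gap $(a_2,b_2)$, a vector with Rayleigh quotient at most $a_2$ can carry a nonzero high-energy component (above $b_2$), compensated by a low-energy part whose contribution to $Q_2$ is small or negative. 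Thus the claimed inclusion $J_{12}(E_1(\lambda_1)\mathcal{H}_1)\subseteq E_2(a_2)\mathcal{H}_2\subseteq E_2(\lambda_2)\mathcal{H}_2$ is false, the asserted identity $E_2(\lambda_2)i_2Jp_1E_1(\lambda_1)=J_{12}|_{E_1(\lambda_1)\mathcal{H}_1}$ does not hold (the projection $E_2(\lambda_2)$ can genuinely kill a component of $J_{12}\omega$), and injectivity of $E_2(\lambda_2)i_2Jp_1E_1(\lambda_1)$ --- which is precisely statement (2) --- has not been established by your argument. Statement (1) survives, because the min--max dimension count only needs the Rayleigh quotient bound on a subspace of the right dimension, not containment in a spectral range; but the phrases ``maps injectively into $E_1(a_1)\mathcal{H}_1$'' and ``places $J_{12}(\cdots)\subseteq E_2(a_2)\mathcal{H}_2$'' should be replaced by the honest min--max statement.

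The gap in (2) is repairable, though it requires more than you wrote. Once (1) is in hand, suppose $\omega\in E_1(\lambda_1)\mathcal{H}_1\setminus\{0\}$ and $E_2(\lambda_2)J_{12}\omega=0$: then $J_{12}\omega\ne 0$ (by the smallness estimate on $\|J_1'\omega\|$ together with \eqref{e:rho}), and $J_{12}\omega$ lies in the spectral subspace of $A_2$ for $[b_2,\infty)$, giving $Q_2(J_{12}\omega)\ge b_2\|J_{12}\omega\|_2^2>a_2\|J_{12}\omega\|_2^2$, contradicting your transfer bound; combined with the equal-dimension count this yields (2). The paper avoids this two-stage ``first prove the gap, then prove injectivity'' structure altogether: Proposition~\ref{p:closedimage} derives a \emph{quantitative} lower bound $\|E_2(\lambda_2)i_2Jp_1E_1(\lambda_1)\omega\|_2^2\ge\varepsilon_0\|E_1(\lambda_1)\omega\|_1^2$ directly from the hypotheses (by splitting $Q_2(i_2Jp_1E_1\omega)$ along $E_2(\lambda_2)$ and $\mathrm{id}-E_2(\lambda_2)$ and comparing), while Remark~\ref{r:closedimage} gives the same for the adjoint $E_1(\lambda_1)i_1Jp_2E_2(\lambda_2)$; both maps are then bounded below, so the first is bijective and (1) and (2) follow simultaneously. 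This route also works when the spectral projections have infinite rank, where your dimension-sandwich argument would not conclude.
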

\bigskip
For the proof of this theorem see Appendix.\\

\begin{remark}\label{R: J_2} If  the spectral projections $E_l(\lambda),~ l=1,2$
have finite rank for all $\lambda $, then the condition
$(i_2Jp_1)^*=i_1Jp_2$ is not necessary and the condition that the
operator $J_2$ is self-adjoint can be replaced by a weaker
condition that $J_2$ is merely symmetric on ${\rm Im}(J_2)\subset
\mathcal{H}_2$. The projections $E_l(\lambda),~ l=1,2$ have finite
rank in the case of the Witten Laplacian $\Delta _{h,f,g}$.
\end{remark}

\begin{remark}\label{R:thesame} Since $\rho>1, \beta_1\geq 1, \gamma_1>0$ and
$\varepsilon_1>0$, we, clearly, have $a_2>a_1$. The formula
(\ref{e:b2}) is equivalent to the formula
\[
b_1=\rho\left[ \beta_2 \left(b_2+\gamma_2+
\frac{(b_2+\gamma_2-\lambda_{02})^2}{\alpha_2-b_2-\gamma_2}\right)+
\varepsilon_2\right],
\]
which is obtained from (\ref{e:a2}), if we replace $\alpha_1,
\beta_1, \gamma_1, \varepsilon_1, \lambda_{01}$ by $\alpha_2,
\beta_2, \gamma_2, \varepsilon_2, \lambda_{02}$ accordingly and
$a_1$ and $a_2$ by $b_2$ and $b_1$ accordingly. In particular,
this implies that $b_1>b_2$.
\end{remark}

\section{Proof of the main theorem}\label{S:MainThmAndProof}
We start by describing the setting for the application of Theorem
\ref{t:equivalence}.\\

Let $A_{2}$ be the Witten Laplacian $\Delta_{h,f,g}$ with the
domain
\begin{equation*}
D(\Delta _{h,f,g})= \{\omega \in H^2(M; \Lambda T^*M) :~
\mathbf{t}(\omega)=0 ,~ \mathbf{t}(d^*_{h,f,g} \omega) = 0 \}
\end{equation*}
(see \eqref{E:MainOp}). The operator $A_{2}$
corresponds to the quadratic form
\begin{equation*}
Q_{2}(\omega )=Q_{h,f,g}(\omega)=\frac{1}{h}\left(\parallel
d_{h,f} \omega
\parallel_{g} ^2 +\parallel d^{*}_{h,f,g} \omega
\parallel_{g} ^2\right)
\end{equation*}
with the domain $D(Q_2)= \{\omega \in H^1(M; \Lambda T^*M) :
\mathbf{t}(\omega)=0\}$ (see \eqref{E:MainQuad}).
Let $A_{1}=
\Delta_{\textrm{mod}}$ be the model operator (see \eqref{E:ModelOp}),
and $Q_{1}$ be
the quadratic form corresponding to the operator $A_1$ with the
domain
\begin{equation*}
    D(Q_1)= H^1(\mathbb{R}^n, \Lambda T^*\mathbb{R}^n)^{N_0}
\oplus H^1(\mathbb{R}^{n-1}, \Lambda T^*\mathbb{R}^{n-1})^{N}.
\end{equation*}
We have
\begin{equation*}
    D(Q_2)\subset \mathcal{H}_2 = L^2(M, \Lambda T^*M),
   \end{equation*}  and
\begin{equation*}
\begin{aligned}
    D(Q_1)\subset \mathcal{H}_1&=
L^2(\mathbb{R}^n, \Lambda T^*\mathbb{R}^n)^{N_0} \oplus
L^2(\mathbb{R}^{n-1}, \Lambda T^*\mathbb{R}^{n-1})^{N}\\ & \cong
\left(L^2(\mathbb{R}^n, \Lambda T^*\mathbb{R}^n) \otimes
\mathbb{C}^{N_0}\right) \oplus \left(L^2(\mathbb{R}^{n-1}, \Lambda
T^*\mathbb{R}^{n-1}) \otimes
\mathbb{C}^N \right).
\end{aligned}
\end{equation*}\\

For each interior critical point $\bar{x}_i \in S_0$, we choose local
coordinates $x_1, ..., x_n$. Let $B(\bar{x}_i, r) \subset M$ be
the open ball around $\bar{x}_i$ with radius $r$ and $B_i(0,r)$
be the corresponding ball in $\mathbb{R}^n$ in these coordinates.\\

Recall that at each boundary critical point $\bar{y}_j \in S_+$, we have
$\frac{\partial f}{\partial \bar n}(\bar{y}_j) = |\nabla
f(\bar{y}_j)|$. Then it is possible to find local coordinates
$x_1, x_2, ..., x_n$ near $\bar{y}_j$ such that in these
coordinates $\bar{y}_j$ is the origin, $\partial M = \{x_n=0\}$,
$M=\{x_n \leq 0\}$,
\begin{equation}\label{E:f}
f(x)= x_n + f'(x'),
\end{equation} and
\begin{equation}\label{E:g}
g=g_{nn}(x) dx_n^2 + g'(x),
\end{equation}
where $x =(x', x_n)$ (see (3.27) in \cite{HN} and Appendix B in \cite{F}). Here
$f'=f\mid_{\partial M}$, $g'$ is the restriction of the metric $g$
to the tangent space spanned by $\{{\partial /
\partial x_1}, {\partial / \partial x_2},...,{\partial / \partial
x_{n-1}}\}$ and $x'$ is any coordinates on $\partial M$ such that
$\bar{y}_j$ is the origin. Let $g_j=g(0)$ be the
constant metric in these coordinates. Furthermore, since $f\mid_{\partial
M}$ is a Morse function on the boundary,
the tangential coordinates $x_1, ..., x_{n-1}$ can be chosen so that
\begin{equation}\label{E:f'}
f'(x')= f(0)+\sum_{r=0}^{n-1} d_r x_r^2,
\end{equation}
where the coefficients $d_r$ for $r=1,...,n-1$ in the expression
of $f'$ are non vanishing real constants.
\\

Let $C(\bar{y}_j, r)= \{ x \in M: |x'| < r, ~-r <x_n \leq 0 \}$
for some $r>0$ and let $C_j(0,r)$ be the corresponding set in
$\mathbb{R}^n_{-}$. Choose $r$ small enough so that around each
boundary critical points $\bar{y}_j$ we can choose the special
coordinates, all the sets $B(\bar{x}_i, r)$ and $C(\bar{y}_j, r)$
are disjoint, and each $B(\bar{x}_i, r)$ is in the interior of
$M$. Let $B(\bar{y}_j, r) \subset \partial M$ be the open ball
around $\bar{y}_j$ with radius $r$ on the boundary $\partial M$,
and let $B_j(0,r)$ be the corresponding set in $\partial
\mathbb{R}^n_{-} = \mathbb{R}^{n-1}$. Let
\begin{equation*}
\mathcal{H}_0= (\oplus_{i=1}^{N_0} L^2(B_i(0, r), \Lambda
T^*\mathbb{R}^n|_{B_i(0, r)})) \oplus (\oplus_{j=1}^{N} L^2(B_j(0,
r), \Lambda T^*\mathbb{R}^{n-1}|_{B_j(0, r)})).
\end{equation*}

For some technical reasons that will become clear later, we choose
$\kappa$ such that $\frac{1}{3} < \kappa < \frac{1}{2}$. Let $\psi
~ \in C^\infty(\mathbb{R}^{n})$ such that $0 \leq \psi \leq 1$,
$\psi(x) = 1$ if $|x| \leq 1$, $\psi(x) = 0$ if $|x| \geq 2$. For
small enough $h$, $\psi_i^{(h)} (x)= \psi(h^{-\kappa}x) \in
C_c^\infty(B_i(0, r))$. Let $\phi ~ \in
C^\infty(\mathbb{R}^{n-1})$  such that $0 \leq \phi \leq 1$,
$\phi(x) = 1$ if $|x| \leq 1$, $\phi(x) = 0$ if $|x| \geq 2$. For
small enough $h$, $\phi_j^{(h)} (x)= \phi(h^{-\kappa} x) \in
C_c^\infty(B_j(0, r))$. Let $J$ be the multiplication operator by
$ (\oplus_{i=1}^{N_0} \psi_i^{(h)}(x)) \oplus
(\oplus_{j=1}^N \phi_j^{(h)}(x))$ in $\mathcal{H}_0$.\\

Let $i_{1} : \mathcal{H}_0 \rightarrow \mathcal{H}_1$ be the
natural inclusion  and let $p_{1}: \mathcal{H}_1 \rightarrow
\mathcal{H}_0$ be the restriction map, then $p_1\circ i_1={\rm
id}_{\mathcal{H}_0}$. Furthermore, the operators $J_1=i_1Jp_1$ and
$J_{1}^{'}$ clearly satisfy the five properties listed after the
definition of $J_1$ in Section \ref{s:abstract-equiv}. Indeed, the
last property follows from the calculation
\begin{eqnarray*}
Q_1(J_1 \omega) + Q_1(J_1' \omega)-Q_1(\omega) &=& (A_1J_1
\omega,J_1 \omega)_1+ (A_1J'_1 \omega,J'_1 \omega)_1-(A_1 \omega,
\omega)_1\\
&=& ((J_1A_1J_1 + J'_1A_1J'_1 -A_1) \omega,\omega)_1\\
&=& h\sum_{i=1}^{N_0}((| d\psi_i^{(h)}|^2  +
|d\tilde{\psi}_i^{(h)}|^2)\omega, \omega )_1\\
&&+h\sum_{j=1}^{N} ((|d\phi_j^{(h)}|^2 +
|d\tilde{\phi}_j^{(h)}|^2) \omega,\omega )_1,
\end{eqnarray*}
where $\tilde{\psi}_i^{(h)}= \sqrt{1-(\psi_i^{(h)})^2}$. The last
equality follows from IMS localization formula
\begin{equation*}
J_1A_1J_1 + J'_1A_1J'_1 -A_1=
h\sum_{i=1}^{N_0}(|d\psi_i|^2+|d\tilde{\psi_i}|^2)+h\sum_{j=1}^{N}(|d\phi_j|^2+|d\tilde{\phi_j}|^2)
\end{equation*}
(see (11.37) in \cite{CFKS}). All of $|d\psi_i^{(h)}|$,
$|d\tilde{\psi}_i^{(h)}|$, $|d\phi_j^{(h)}|$ and
$|d\tilde{\phi}_j^{(h)}|$ are $O(h^{-\kappa})$; therefore, the
inequality \eqref{e:15} is satisfied for $Q_1$ and
$\gamma_1=O(h^{1-2\kappa})$.
\\

Now we will define the operators $i_2:\mathcal{H}_ 0\rightarrow
\mathcal{H}_2$ and $p_{2}: \mathcal{H}_2 \rightarrow
\mathcal{H}_0$. Let $\phi_n ~ \in C^\infty(\mathbb{R}_{-})$ such
that $0 \leq \phi_n \leq 1$, $\phi_n(x) = 1$ if $-1 \leq x \leq
0$, $\phi_n(x) = 0$ if $x \leq -2$. For small enough $h$,
$\phi_n^{(h)} (x)= \phi(h^{-\kappa} x) \in C_c^\infty((-r, 0])$.
Let
\begin{equation}\label{E:alpha}
\alpha=C(h) \phi_n^{(h)}(x_n)e^{\frac{x_n}{h}} dx_n
\end{equation}
 be a 1-form on $\mathbb{R}_{-}$. We choose the constant  $C(h)$
 so that the form $\alpha $ has $L^2$-norm one  with respect
 to the metric $g_{nn}(0) dx_{n}^2$.\\

On $L^2(B_i(0, r), \Lambda T^*\mathbb{R}^n|_{B_i(0,
r)})$ we define  $i_2$ to be the inclusion given by the choice of
the special coordinates near $\bar x_i$ in $B_i(0, r)$, and on\\
$L^2(B_j(0, r), \Lambda T^*\mathbb{R}^{n-1}|_{B_j(0, r)})$ it is
defined by
\begin{equation}\label{E:i2}
i_2{\omega}= i(\alpha \wedge \omega)
\end{equation}
where $i: L^2(C_j(0, r), \Lambda T^*\mathbb{R}^{n}_{-}|_{C_j(0,
r)}) \rightarrow L^2(M, \Lambda T^*M)$ is the inclusion given by
the choice of the special coordinates near each $\bar y_j$ in
$C_j(0, r)$.\\

Let $L$ be the subspace of $L^2(M, \Lambda T^*M)$ which contains
only the forms which are of the form $\alpha \wedge \omega'(x')$
in the special coordinates around the boundary critical points
where $\omega'$ is a form which depends only tangential
components. The map $p_2$ is the composition of the restriction
map
\begin{align*}
p: &L^2(M, \Lambda T^*M)\rightarrow \\
&(\oplus_{i=1}^{N_0} L^2(B_i(0, r), \Lambda
T^*\mathbb{R}^n|_{B_i(0, r)})) \oplus (\oplus_{j=1}^{N} L^2(C_j(0,
r), \Lambda T^*\mathbb{R}^{n}_{-}|_{C_j(0, r)})),
\end{align*}
and the map $r: L^2(C_j(0, r), \Lambda
T^*\mathbb{R}^{n}_{-}|_{C_j(0, r)})\rightarrow L^2(B_j(0, r),
\Lambda T^*\mathbb{R}^{n-1}|_{B_j(0, r)})$ defined by the natural
extension of the map $r(\omega(x))= \frac{1}{C(h)} [i_{\partial /
\partial x_n}(\omega)](x',0)$ on the subspace of the smooth forms $\omega=
\alpha \wedge \omega'(x')$  to the closure of this subspace in
$L^2(C_j(0, r),\Lambda T^*\mathbb{R}^{n}_{-}|_{C_j(0, r)})$, and
zero on the orthogonal complement. In other words, $p_2$ is
defined by the formula
\begin{equation}\label{E:p2}
p_2(\omega(x))= \frac{1}{C(h)} [i_{\partial / \partial
x_n}p(\omega)](x',0)
\end{equation}
for any smooth $\omega \in L \subset L^2(M, \Lambda T^*M)$ such
that {\small$p(\omega) \in L^2(C_j(0, r), \Lambda
T^*\mathbb{R}^{n}_{-}|_{C_j(0, r)})$}.\\

It is easy to check that  $p_2\circ i_2={\rm id}_{\mathcal{H}_0}$
and the operator $J_2=i_2Jp_2$  maps the domain of the quadratic
form $Q_2$ to itself.\\

To show that the inequality \eqref{e:15} is satisfied for $Q_2$,
first
 by \eqref{E:MainQuad2} we have,
\begin{align*}
&Q_2(J_2 \omega) + Q_2(J_2' \omega)-Q_2(\omega) = (A_2J_2
\omega,J_2 \omega)_g+ (A_2J'_2 \omega,J'_2 \omega)_g-(A_2 \omega,
\omega)_g\\ &- \int_{\partial M}(\mathbf{t} d^*_{h,f,g} J_2\omega)
\wedge \star \mathbf{n} \overline{J_2\omega} - \int_{\partial
M}(\mathbf{t} d^*_{h,f,g} J'_2\omega) \wedge \star \mathbf{n}
\overline{J'_2\omega} + \int_{\partial M}(\mathbf{t} d^*_{h,f,g}
\omega) \wedge \star \mathbf{n} \bar\omega
\end{align*}
On $L$, we have
$$\mathbf{t}d^*_{h,f,g}(J_2\omega)=
d^*_{h,f,g}(\alpha)\wedge \phi_j^{(h)}\omega'$$ and
$$\mathbf{t}d^*_{h,f,g}(J'_2\omega)=
d^*_{h,f,g}(\alpha)\wedge \tilde{\phi}_j^{(h)}\omega'.$$
Therefore, the multiplication with $J_2$ and $J'_2$ commutes with
$d^*_{h,f,g}$ and we have
\begin{equation}\label{E:boundaryint}
-(\mathbf{t} d^*_{h,f,g} J_2\omega) \wedge (\star \mathbf{n}
\overline{J_2\omega}) - (\mathbf{t} d^*_{h,f,g} J'_2\omega) \wedge
(\star \mathbf{n} \overline{J'_2\omega}) + (\mathbf{t} d^*_{h,f,g}
\omega) \wedge (\star \mathbf{n} \bar\omega)=0
\end{equation}
on the boundary. On the orthogonal complement of $L$, around the
boundary\\ $J_2 \omega = 0 $ and $J'_2 \omega =\omega$; therefore
the equation \eqref{E:boundaryint} is still valid. So we have
\begin{align*}
&Q_2(J_2 \omega) + Q_2(J_2' \omega)-Q_2(\omega) = (A_2J_2
\omega,J_2 \omega)_g+ (A_2J'_2 \omega,J'_2 \omega)_g-(A_2 \omega,
\omega)_g\\
&= h\sum_{i=1}^{N_0}((|d\psi_i^{(h)}|^2 +
|d\tilde{\psi}_i^{(h)}|^2)\omega, \omega )_g+ \sum_{j=1}^{N}
((|d\phi_j^{(h)}|^2 + |d\tilde{\phi}_j^{(h)}|^2)\omega, \omega )_g
\end{align*}
by IMS localization formula (see(11.37) in \cite{CFKS}). Since all
$|d\psi_i^{(h)}|$, $|d\tilde{\psi}_i^{(h)}|$,
$|d\phi_j^{(h)}|$ and $|d\tilde{\phi}_j^{(h)}|$ are
$O(h^{-\kappa})$, the inequality \eqref{e:15} is satisfied for
$Q_2$ and $\gamma_2=O(h^{1-2\kappa})$.\\

We note that the operator $J_2$ is not self-adjoint, however it is
symmetric on $\textrm{Im}(J_2)$.\\

The following lemma provides the localization of eigenforms near
the points in $S_0 \cup S_+$. The proof of this lemma is similar
to the proof of Theorem 3.2.3 (p. 34 in \cite{HN}).\\

\begin{lemma}\label{L:alpha} Let $E$ be the complement of
the union of balls $B(\bar{x}_i, h^\kappa),~ i=1,...,N_0$, and $C(
\bar{y}_j, h^\kappa),~ j=1,...,N$. If $\omega \in D(Q_{h,f,g})$
such that $\supp(\omega)\subseteq E$, then there exists $h_0>0$
such that  for all $h \in (0, h_0)>0$
\begin{equation}\label{E:alpha}
Q_{h,f,g}(\omega) \geq C h^{2\kappa - 1}\|\omega\|_{g}^2
\end{equation}
for some constant $C>0$.
\end{lemma}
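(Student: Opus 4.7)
The plan is to use the identity
$$Q_{h,f,g}(\omega) = h\bigl(\|d\omega\|_g^2 + \|d^*\omega\|_g^2\bigr) + \bigl((\mathcal{L}_{\nabla f}+\mathcal{L}_{\nabla f}^{*})\omega,\omega\bigr)_g + h^{-1}\bigl(|\nabla f|^2\omega,\omega\bigr)_g - \int_{\partial M}\frac{\partial f}{\partial \bar n}\,|\omega|^2\, d\mu_{\partial M},$$
valid on $D(Q_{h,f,g})$ (this is the second displayed formula for $Q_{h,f,g}$ in Section \ref{S:Pre}). The non-negative first term is kept in reserve, and the Lie-derivative term has absolute value bounded by $C\|\omega\|_g^2$; since $2\kappa-1<0$, this is $o\bigl(h^{2\kappa-1}\|\omega\|_g^2\bigr)$ and can be absorbed for small $h$.

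The key pointwise input is $|\nabla f(x)|^2\ge c\,h^{2\kappa}$ on $E$. Indeed, $|\nabla f|$ vanishes only on the interior critical set $S_0$ (at any critical point of $f|_{\partial M}$ the hypothesis $\partial f/\partial\bar n\ne 0$ forces $|\nabla f|\ne 0$, and at non-critical boundary points $|\nabla_{\partial M}f|>0$), so the Morse bound $|\nabla f(x)|\ge c'\,\dist(x,S_0)$ near $S_0$ and uniform positivity elsewhere give the claim. A closer inspection yields the sharper local bound $|\nabla f|^2\ge c_1>0$ on a fixed collar $U=\{x:\dist(x,\partial M)<\eta_0\}$ chosen disjoint from a neighborhood of $S_0$, hence
$$h^{-1}\bigl(|\nabla f|^2\omega,\omega\bigr)_g \ge c\,h^{2\kappa-1}\|\omega\|_{L^2(M\setminus U)}^2 + (c_1/h)\,\|\omega\|_{L^2(U)}^2.$$
The surplus $(c_1/h)\|\omega\|_{L^2(U)}^2$ in the collar will absorb trace contributions from the boundary integral.

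The main obstacle is the boundary term $-\int_{\partial M}(\partial f/\partial\bar n)|\omega|^2\,d\mu$. I would split $\partial M\cap\supp\omega$ into $\{\partial f/\partial\bar n\le 0\}$, where the contribution is non-negative and is discarded, and $\{\partial f/\partial\bar n>0\}$; by the support hypothesis and continuity of $\partial f/\partial\bar n$ the latter set lies at distance $\ge h^\kappa$ from $S_+$ and at a fixed positive distance from $S_-$, so $\partial f/\partial\bar n$ is uniformly bounded there. The sharp trace estimate
$$\int_{\partial M}|\omega|_g^2\, d\mu \le \frac{2}{\delta}\|\omega\|_{L^2(U_\delta)}^2 + 2\delta\,\|\nabla\omega\|_g^2,\qquad U_\delta=\{x\in M:\dist(x,\partial M)<\delta\},$$
applied with $\delta$ of order $h$, together with the Gaffney inequality $\|\nabla\omega\|_g^2\le C_G(\|d\omega\|_g^2+\|d^*\omega\|_g^2+\|\omega\|_g^2)$ valid on forms with $\mathbf{t}\omega=0$, allows the $2\delta\|\nabla\omega\|_g^2$ piece to be absorbed into the reserved $h(\|d\omega\|_g^2+\|d^*\omega\|_g^2)$, and the $(C/h)\|\omega\|_{L^2(U_h)}^2$ piece to be absorbed into $(c_1/h)\|\omega\|_{L^2(U)}^2$ once $h$ is small enough that $U_h\subset U$.

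The technical heart of the argument is matching constants so that $c_1$ genuinely dominates the trace remainder; I expect to achieve this through a finite partition of unity on $\partial M$, using the normal-form coordinates \eqref{E:f}--\eqref{E:g} near each $\bar y_j\in S_+$ (where $\omega$ vanishes by hypothesis), analogous coordinates near each $\bar y\in S_-$ (where the negative sign of $\partial f/\partial\bar n$ makes the boundary term directly favorable), and any convenient coordinates elsewhere, estimating chart by chart. This parallels the strategy of \cite[Theorem 3.2.3]{HN} transplanted to the quadratic-form setting. Assembling all the estimates then yields $Q_{h,f,g}(\omega)\ge Ch^{2\kappa-1}\|\omega\|_g^2$ for every $h\in(0,h_0)$ with $h_0$ sufficiently small.
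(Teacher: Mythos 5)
The core difficulty your scheme glosses over is that near each $\bar y_j\in S_+$, the cancellation between the collar part of the potential $h^{-1}|\nabla f|^2$ and the boundary trace term is exact to leading order, and the claimed $h^{2\kappa-1}$ positivity lives only in the sub-leading tangential gradient. Concretely, in the normal model on $\mathbb{R}_-$ with $f=x_n$ and a $1$-form $\omega=v\,dx_n$ (which has $\mathbf{t}\omega=0$ automatically), one computes
\begin{equation*}
\frac{1}{h}\bigl\|d^*_{h,f}\omega\bigr\|^2 = \frac{1}{h}\|v\|^2 - |v(0)|^2 + h\|v'\|^2,
\end{equation*}
which vanishes precisely on $v=e^{x_n/h}$. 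So on an annulus at tangential distance $\sim h^\kappa$ from $\bar y_j$, the normal factor of $\omega$ can be (close to) this ground state, the collar potential contribution $h^{-1}\|\omega\|^2_{U}$ is eaten entirely by the trace, and only the tangential gradient $|\nabla'f'|^2\gtrsim h^{2\kappa}$ rescues the estimate. Your plan to dominate the trace term by $\frac{2}{\delta}\|\omega\|^2_{U_\delta}+2\delta\|\nabla\omega\|^2$ and then absorb the two halves into the collar potential and the reserved $h(\|d\omega\|^2+\|d^*\omega\|^2)$ cannot close on this annulus: to absorb $2\delta\|\nabla\omega\|^2$ via Gaffney (whose constant $C_G>1$ in general) you must take $\delta\lesssim h/C_G$, but then $\frac{2}{\delta}\gtrsim \frac{2C_G}{h}$ overwhelms the available $\frac{1+O(h^{2\kappa})}{h}$ from the potential. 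Even the sharp trace inequality (constant $1$, no factor of $2$) still needs $C_G\le 1$, which is not available. The factor-of-two mismatch is not a bookkeeping issue to be ``finessed by a partition of unity''; it is exactly where the argument must instead exploit the separated structure.

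The proof that does work (and which the paper cites via Theorem 3.2.3 of \cite{HN}) uses the special coordinates \eqref{E:f}--\eqref{E:g}, in which $f=x_n+f'(x')$ and $g=g_{nn}\,dx_n^2+g'(x')$ split, so that on a collar the quadratic form (after freezing $g_{nn}$, cf.\ Lemma~\ref{L:Const}) decouples as in \eqref{E:sep}: a $1$-dimensional Witten form in $x_n$ on $\mathbb{R}_-$ (which is $\ge 0$, and which is either $\ge ch^{-1}$ on the orthogonal complement of $\alpha$ or contributes nothing on $\alpha$ itself) tensored with the Witten form for $f|_{\partial M}$ on $\mathbb{R}^{n-1}$. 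The latter, for forms supported at tangential distance $\ge h^\kappa$ from the critical set of $f|_{\partial M}$, gives $\ge Ch^{2\kappa-1}$ by exactly the argument you used in the interior (and which Lemma~\ref{L:modalpha} records for the model). Pasting by IMS localization and handling $S_-$ as you did (the boundary contribution is favorable and discarded) finishes the proof. You correctly identified all the ingredients and the favorable/unfavorable split of $\partial M$; the missing idea is that at the $S_+$ annuli you must separate normal and tangential variables rather than estimate the trace abstractly.
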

\bigskip

Since $\kappa < 1/2$, $h^{2 \kappa -1} \rightarrow \infty$ as $h
  \rightarrow 0$, the lemma implies that for the eigenforms with
 bounded eigenvalues it is enough to consider only the forms
 supported in a small neighborhood of the critical
 points in $S_0 \cup S_+$.\\

\begin{lemma}\label{L:modalpha} Let $\omega \in D(Q_1)$ such
that
$$
\supp(\omega)\subseteq \{x \in \mathbb{R}^{n}: \dist(x,0)
\geq h^\kappa\}^{N_0} \cup \{x \in \mathbb{R}^{n-1}: \dist(x, 0)
 \geq h^\kappa\}^N.
$$
 Then there exists $h_0>0$ such that for all $h
\in (0, h_0)$
 \begin{equation}\label{E:modalpha}
Q_1(\omega) \geq C h^{2\kappa - 1}\|\omega\|^2.
\end{equation}
\end{lemma}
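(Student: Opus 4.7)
\textbf{Proof plan for Lemma \ref{L:modalpha}.} The plan is to decompose $Q_1$ along the natural direct sum structure of the model operator $\Delta_{\mathrm{mod}} = (\oplus_i \Delta_i) \oplus (\oplus_j \Delta_j)$ and establish the inequality separately on each summand, since $Q_1(\omega) = \sum_i Q_1^{(i)}(\omega_i) + \sum_j Q_1^{(j)}(\omega_j)$ and the support condition on $\omega$ decouples along this decomposition. Thus it suffices to treat a single interior summand; the boundary summands are handled identically, because $\Delta_j$ is built from $f|_{\partial M}$ and $g'_j$ on $\mathbb{R}^{n-1}$ by exactly the same recipe, with $\partial M$ viewed as a manifold without boundary.

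Fix an interior critical point $\bar x_i$ and write
\[
Q_1^{(i)}(\omega_i) = h\int_{\mathbb{R}^n} g_i^{lk}\langle \partial_l\omega_i,\partial_k\omega_i\rangle\, dx + (B_i\omega_i,\omega_i) + h^{-1}\int_{\mathbb{R}^n} V_i(x)|\omega_i(x)|^2\, dx,
\]
where the first term is the nonnegative ``kinetic'' form obtained from $-hA_i$ by integration by parts (valid for $\omega_i\in H^1$ because $A_i$ is a constant-coefficient operator with positive-definite principal symbol $g_i^{lk}$). I would then exploit three elementary facts: (i) the kinetic term is nonnegative and can be dropped for the lower bound; (ii) $B_i$ is a bounded self-adjoint endomorphism of $\Lambda T^*\mathbb{R}^n$ with a norm bound $C_B$ independent of $h$, so $|(B_i\omega_i,\omega_i)| \leq C_B\|\omega_i\|^2$; (iii) because $\bar x_i$ is a non-degenerate critical point of $f$, the Hessian matrix $H=(\partial_r\partial_k f(\bar x_i))$ is invertible, and inspection of the formula for $V_i$ shows $V_i(x) = \langle Hx, g_i^{-1} Hx\rangle$, which is a positive-definite quadratic form; hence there exists $c_i>0$ with $V_i(x) \geq c_i|x|^2$ for all $x\in\mathbb{R}^n$.

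Since $\mathrm{supp}(\omega_i) \subseteq \{|x|\geq h^{\kappa}\}$, fact (iii) gives $V_i(x) \geq c_i h^{2\kappa}$ pointwise on the support, so
\[
h^{-1}\int V_i(x)|\omega_i(x)|^2\,dx \geq c_i h^{2\kappa - 1}\|\omega_i\|^2.
\]
Combining with (i) and (ii) yields $Q_1^{(i)}(\omega_i) \geq (c_i h^{2\kappa-1} - C_B)\|\omega_i\|^2$. Because $\kappa < 1/2$ forces $h^{2\kappa-1}\to\infty$ as $h\to 0$, there exists $h_0>0$ such that for $h\in(0,h_0)$ we have $c_i h^{2\kappa-1} - C_B \geq \tfrac{c_i}{2}h^{2\kappa-1}$. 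The identical argument applied to each boundary summand, with $H$ replaced by the Hessian of $f'_j$ at $\bar y_j$ (nondegenerate because $f|_{\partial M}$ is Morse), produces constants $c'_j>0$ governing the $\Delta_j$ components. Taking $C = \tfrac{1}{2}\min\{\min_i c_i,\, \min_j c'_j\}$ and then summing over summands delivers the lemma.

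The argument is essentially routine once the positive-definiteness of the quadratic potentials $V_i$ and $V'_j$ is in hand; this is the only place the Morse hypothesis enters, and it is the main conceptual step. No new analytic input (such as IMS localization or a refined partition of unity) is required, in contrast to Lemma \ref{L:alpha}, because the model operator has constant coefficients for its principal part and an exactly quadratic potential — so the global estimate $V_i(x)\geq c_i|x|^2$ holds everywhere, not merely near the critical point.
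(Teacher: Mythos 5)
Your proposal is correct and follows essentially the same route as the paper: decompose $Q_1$ along the direct sum $\oplus_i \Delta_i \oplus (\oplus_j \Delta_j)$, drop the nonnegative kinetic term, and use the support condition together with the positive-definiteness of the quadratic potentials $V_i$, $V'_j$ (which holds because the critical points are nondegenerate) to get $V \geq c\,h^{2\kappa}$ on $\supp\omega$, hence $h^{-1}(V\omega,\omega) \geq c\,h^{2\kappa-1}\|\omega\|^2$. In fact you are slightly more careful than the paper's text, which simply drops the term $(B_i\omega,\omega)$ as if it were nonnegative; your explicit bound $|(B_i\omega,\omega)| \leq C_B\|\omega\|^2$ and the observation that $h^{2\kappa-1}\to\infty$ absorbs it is exactly the argument needed to make that step rigorous.
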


\begin{proof} We can write the quadratic form $Q_1$ as
 \begin{equation*}
Q_1(\omega) = (\Delta_{\textrm{mod}} \omega,\omega)_1
=\sum_{i=1}^{N_0}( \Delta_{i} \omega, \omega)_{g_i} +
\sum_{j=1}^{N}( \Delta_{j} \omega, \omega)_{g'_j}.
\end{equation*}
Suppose $\supp(\omega)$ is not empty in the domain of $\Delta_i$
for some $i\in \{1,...,N_0\}$.  Since all the operators in the
definition of the quadratic form $Q_1$ are positive operators, we
have
\begin{eqnarray*}
Q_1(\omega) &\geq& ( \Delta_{i} \omega, \omega)_{g_i}\\ &=&
h\left(\parallel d \omega
\parallel_{g_i} ^2 +\parallel d^{*} \omega
\parallel_{g_i} ^2\right)+\left((\mathcal{L}_{\nabla f} +
\mathcal{L}_{\nabla f}^{*}) \omega , \omega \right)_{g_i}
+h^{-1}\left(V_i \omega ,\omega \right)_{g_i} \\&\geq&
h^{-1}\left(V_i \omega ,\omega \right)_{g_i}.
\end{eqnarray*}
We also have that $V_i \geq C_1 h^{2 \kappa}$ for some $C_1>0$.
Therefore, we conclude that  $Q_{1} (\omega) \geq C h^{2\kappa-1}
\parallel \omega \parallel^2$.\\

If $\supp(\omega)$ is empty in the domain of $\Delta_i$ for all
$i$, then it is not empty in the domain of $\Delta_j$ for some
$j\in\{1,...,N\}$. Since $V' \geq C_2 h^{2 \kappa}$ for some
$C_2>0$, similar argument will lead us the inequality
\eqref{E:modalpha}.
\end{proof}
\bigskip

Our next goal is to obtain the estimates (\ref{e:16}) and
(\ref{e:A1A2}). In the setting of this section these estimates are
equivalent to the inequalities
\begin{equation}\label{E:betaepsilon01}
Q_2 (i_2 \phi_j^{(h)}(x)  \omega) \geq (1-C_1 h^{\kappa}) Q_1 (i_1
\phi_j^{(h)}(x)  \omega) -C_2 h^{3 \kappa -1}
\parallel i_1\phi_j^{(h)}(x)  \omega
\parallel_{g'_j } ^2
\end{equation}
and
\begin{equation}\label{E:betaepsilon02}
Q_1 ( i_1 \phi_j^{(h)}(x)  \omega) \geq (1-C_1 h^{\kappa}) Q_2 (
i_2 \phi_j^{(h)}(x)  \omega) -C_2 h^{3 \kappa- 1}
\parallel  i_2 \phi_j^{(h)}(x)  \omega
\parallel_{g} ^2,
\end{equation}
where  $\omega \in D \cap \left(\oplus_{j=1}^{N} L^2(B_j(0, r),
\Lambda T^*\mathbb{R}^{n-1}|_{B_j(0, r)})\right)$ and $C_1, C_2
>0$ are some constants. Remember that $$D=\{\omega\in {\mathcal
H}_0 : i_1J\omega\in D(Q_1)\}=\{\omega\in {\mathcal H}_0 :
i_2J\omega\in D(Q_2)\}.$$ In the proof of the inequalities we will
use some intermediate quadratic forms
in order to compare $Q_1$ and $Q_2$.\\

At each critical point $\bar y_j \in S_+$, let $x_1,...,x_n$ be
the special coordinates. In these coordinates $f$ and $g$ can be
written as \eqref{E:f} and \eqref{E:g} respectively. We use
the formula \eqref{E:f'}  to  extend $f$ to $\mathbb{R}_{-}^{n}$.
Let
$\tilde{g}_j$ be an extension of the metric $g$ to
$\mathbb{R}_{-}^{n}$ such that $\tilde{g}_j(x) = g(x)$ if
$|x'|\leq h^\kappa$ and $|x_n| \leq C$ for some positive constant
$C$, and
$\tilde{g}_j(x)=g(0)$ if $|x'|\geq 2h^\kappa$ or $|x_n| \geq 2C$.\\

Note that in the set $|x'| \leq h^{\kappa}$, $|x_n| \leq
h^{\kappa}$, $f=f_j$ and $g=\tilde{g}_j$, therefore for each $\bar
y_j \in S_+$ and for any smooth $\omega \in \mathcal{H}_0$ we have
\begin{equation}\label{E:equality}
Q_2(i_2(\phi_j^{(h)}(x) \omega)) = Q_{h,f_j,\tilde{g}_j}(
\alpha\wedge i_1(\phi_j^{(h)}(x) \omega)),
\end{equation}
where $Q_{h,f_j,\tilde{g}_j}$ is a quadratic form
on $H^1(\mathbb{R}^{n}_{-}; \Lambda T^*\mathbb{R}^{n}_{-})$.\\

The proof of the following lemma is similar to the proof of
Lemma 3.3.7 in \cite{HN}.\\

\begin{lemma} Let $f= f_j$, $g_1 = \tilde{g}_j$, and $g_2(x', x_n) = \tilde{g}_j (x',0)$.
Then for some constant $C \geq 0$,
 \begin{equation}\label{E:app1}
 Q_{h,f, g_1} (\omega) \geq (1-C h^{\kappa})
 Q_{h,f,g_2} (\omega) -C h^{\kappa}
 \parallel \omega \parallel_{g_2} ^2
 \end{equation}
for $\omega \in H^1(\mathbb{R}^{n}_{-}; \Lambda
T^*\mathbb{R}^{n}_{-})$ such that $\mathbf{t}(\omega)=0$ and
$\supp\,\omega \subset \{x_n \geq -C_0 h^\kappa \}$ for some
constant $C_0>0$.
\end{lemma}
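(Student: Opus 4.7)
The plan is to compare the two quadratic forms term by term, relying on the key pointwise estimate available on the support of $\omega$. Since $|x_n| \le C_0 h^{\kappa}$ there and $g_2(x',x_n) = \tilde g_j(x',0)$, smoothness of $\tilde g_j$ gives
\[
|g_1(x) - g_2(x)| = |\tilde g_j(x', x_n) - \tilde g_j(x', 0)| \le C|x_n| \le C C_0 h^{\kappa},
\]
with the same $O(h^{\kappa})$ bound for the tangential derivatives $\partial_{x_r}(g_1 - g_2)$, $r < n$. The normal derivative $\partial_{x_n}(g_1 - g_2) = \partial_{x_n}\tilde g_j$ is, however, only of order $O(1)$: this is the source of the main technical difficulty below. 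The goal is to bound $Q_{h,f,g_2}(\omega) - Q_{h,f,g_1}(\omega)$ from above by $C h^{\kappa}\bigl(Q_{h,f,g_2}(\omega) + \|\omega\|_{g_2}^2\bigr)$, which is exactly equivalent to \eqref{E:app1}.

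I would use the representation $Q_{h,f,g}(\omega) = \frac{1}{h}\bigl(\|d_{h,f}\omega\|_g^2 + \|d^*_{h,f,g}\omega\|_g^2\bigr)$ from \eqref{E:MainQuad} and treat the two summands separately. The first is straightforward: $d_{h,f} = hd + df\wedge$ is metric independent, and the pointwise identification $\langle\cdot,\cdot\rangle_{g_1}\,d\mu_{g_1} = (1 + O(h^{\kappa}))\langle\cdot,\cdot\rangle_{g_2}\,d\mu_{g_2}$ on $\supp\omega$ yields $\tfrac{1}{h}\|d_{h,f}\omega\|_{g_1}^2 \ge (1 - C h^{\kappa}) \tfrac{1}{h}\|d_{h,f}\omega\|_{g_2}^2$ at once. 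For the second summand I would set $R_h := d^*_{h,f,g_1} - d^*_{h,f,g_2} = h(d^*_{g_1} - d^*_{g_2}) + (i_{\nabla_{g_1} f} - i_{\nabla_{g_2} f})$ and apply the Young-type lower bound $\|a + b\|^2 \ge (1 - h^{\kappa})\|a\|^2 - h^{-\kappa}\|b\|^2$ with $a = d^*_{h,f,g_2}\omega$ and $b = R_h\omega$. A direct coordinate analysis of $R_h$ gives $\|R_h\omega\|^2 \le C\bigl(h^{2\kappa}\|\omega\|^2 + h^{2 + 2\kappa}\|\nabla\omega\|^2\bigr)$; the $h^{2\kappa}\|\omega\|^2$ swallows both the $O(h^{\kappa})$ contribution of $i_{\nabla_{g_1} f} - i_{\nabla_{g_2} f}$ and the $O(h)$ contribution of the $O(1)$ zero-order coefficient $h\partial_{x_n}g_1$ in $h(d^*_{g_1} - d^*_{g_2})$. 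After dividing by $h \cdot h^{\kappa}$, the error splits into $h^{1+\kappa}\|\nabla\omega\|^2$, absorbed into $C h^{\kappa}$ times the Dirichlet part $h(\|d\omega\|^2 + \|d^*\omega\|^2)$ of $Q_{h,f,g_2}(\omega)$ through a Gaffney--Friedrichs-type inequality compatible with $\mathbf{t}\omega = 0$, and $h^{\kappa - 1}\|\omega\|^2$.

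The principal obstacle is precisely this last $O(h^{\kappa - 1})\|\omega\|_{g_2}^2$ term, which is much larger than the allowed $O(h^{\kappa})\|\omega\|_{g_2}^2$. The resolution exploits that $\bar y_j \in S_+$ and the special coordinates force $\partial_n f = 1$, so $|\nabla_{g_2} f|^2(\bar y_j) = 1/g_{nn}(0) > 0$, and by continuity $|\nabla_{g_2} f|^2 \ge c > 0$ on $\supp\omega$ for $h$ small. Writing $Q_{h,f,g_2}(\omega)$ through the formula (2.15) of \cite{HN} and controlling the negative boundary integral $-\int_{\partial M}(\partial f/\partial \bar n)\,\langle\omega,\omega\rangle_{g_2}$ via the trace inequality $\|\omega|_{\partial M}\|^2 \le \varepsilon\|\omega\|_{H^1}^2 + C_{\varepsilon}\|\omega\|^2$, one obtains $Q_{h,f,g_2}(\omega) + C\|\omega\|_{g_2}^2 \ge (c/2) h^{-1} \|\omega\|_{g_2}^2$ for $h$ sufficiently small. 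Consequently $h^{\kappa - 1}\|\omega\|_{g_2}^2 \le (2/c) h^{\kappa}\bigl(Q_{h,f,g_2}(\omega) + C\|\omega\|_{g_2}^2\bigr)$, and this is absorbed into the right-hand side of \eqref{E:app1}, completing the argument. The condition $\mathbf{t}\omega = 0$ is used throughout to discard surface contributions arising from integration by parts on the cross terms, paralleling the treatment in Lemma 3.3.7 of \cite{HN}.
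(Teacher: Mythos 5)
Your decomposition of $Q_{h,f,g}$ into its $\|d_{h,f}\omega\|^2$ and $\|d^*_{h,f,g}\omega\|^2$ pieces is reasonable, and you have correctly identified where the difficulty lies: the normal derivative $\partial_{x_n}\tilde g_j$ is only $O(1)$, and after a Young inequality with parameter $h^{\kappa}$ and division by $h$ one is left with an error of order $h^{\kappa-1}\|\omega\|_{g_2}^2$, which is too large. The problem is that your proposed rescue step is wrong.

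You claim that $Q_{h,f,g_2}(\omega) + C\|\omega\|_{g_2}^2 \geq \tfrac{c}{2}\,h^{-1}\|\omega\|_{g_2}^2$ for $\omega$ with $\mathbf{t}\omega = 0$ supported near $\partial M$, invoking $|\nabla f|^2\geq c>0$ and a trace inequality. This inequality is false. Take $\omega = \omega'(x')\wedge\alpha$ with $\alpha$ as in \eqref{E:alpha}, i.e.\ the normal part is the half-line Witten zero mode $e^{x_n/h}dx_n$. Then $\mathbf{t}\omega = 0$ (since $j^*dx_n = 0$ on $\partial M$), $\supp\omega\subset\{x_n\geq -2h^\kappa\}$, and by the separation of variables \eqref{E:sep} one has $Q_{h,f,g_2}(\omega) = Q_{h,f'_j,g_2'}(\omega')\,\|\alpha\|^2 + \|\omega'\|^2\,Q_{h,f_j^{(n)},g_2^{(n)}}(\alpha) = O(1)\cdot\|\omega\|_{g_2}^2$, not $\gtrsim h^{-1}\|\omega\|_{g_2}^2$. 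Concretely, in the scalar $1$D model on $(-\infty,0]$ with $f = x_n$ and $\omega = e^{x_n/h}dx_n$, one has $d^*_{h,f}\omega = (-h\partial_{x_n} + 1)e^{x_n/h} \equiv 0$, so $Q = 0$ although $\|\omega\|^2 = h/2 > 0$. The boundary integral $-\int_{\partial M}(\partial f/\partial\bar n)|\omega|^2$ is exactly what cancels the $h^{-1}|\nabla f|^2$ bulk term on this zero mode; the trace inequality you invoke cannot undo this cancellation, because that cancellation is genuine (it is the reason the points of $S_+$ contribute to the spectrum at all). So the $h^{\kappa-1}\|\omega\|^2$ term produced by your Young estimate cannot be absorbed in the way you describe, and the proof as written does not go through. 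The paper itself does not give a self-contained proof of this lemma; it refers to Lemma~3.3.7 of \cite{HN}, whose argument must be organized differently so that no $O(h^{\kappa-1})$ error term appears in the first place.
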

\bigskip

Note that we also have
\begin{equation}\label{E:app2}
Q_{h,f_j,g_2} (\omega) \geq (1-C h^{\kappa}) Q_{h,f_j,g_1}
(\omega) -C h^{\kappa}
 \parallel \omega \parallel_{g_1} ^2
\end{equation}
since the argument in the proof of Lemma 3.3.7 is symmetric.\\
\bigskip

Now we want to freeze $(g_2)_{nn}$. To do this we will need the following lemma.
\begin{lemma}\label{L:Const} Let $\displaystyle g_3(x)=
\frac {(g_2)_{nn}(0)}{(g_2)_{nn}(x')} ~g_2(x)$.
 Then there exist $C_1, C_2>0$ and $h_0$ such that
for any $h \in (0, h_0)$
\begin{equation}\label{E:app3}
Q_{h,f,g_3} (\omega) \geq (1-C_1 h^{\kappa}) Q_{h,f,g_2} (\omega)
-C_2 h^{1 - \kappa} \parallel \omega \parallel_{g_2} ^2
\end{equation}
for $\omega$ such that ${\rm supp}\, \omega \subset \{ x \in
\mathbb{R}^{n}_{-}: |x'| \leq C_0 h^{\kappa},~ x_n \geq - C_0
h^{\kappa}\}$ for some constant $C_0>0$.
\end{lemma}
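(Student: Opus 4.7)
My plan is to exploit the conformal structure of the change of metrics. Write $g_3 = \phi\, g_2$ with $\phi(x') := (g_2)_{nn}(0)/(g_2)_{nn}(x')$, depending only on $x'$; then $\phi(0) = 1$, $|\phi-1| = O(|x'|) = O(h^{\kappa})$ on $\supp\omega$, and $d\phi$, $d^2\phi$ are uniformly bounded. I will compare $Q_{h,f,g_3}$ and $Q_{h,f,g_2}$ using the definition $Q_{h,f,g}(\omega) = h^{-1}(\|d_{h,f}\omega\|_g^2 + \|d^*_{h,f,g}\omega\|_g^2)$ together with the conformal formulas $d\mu_{g_3} = \phi^{n/2}\,d\mu_{g_2}$ and $\langle\cdot,\cdot\rangle_{g_3} = \phi^{-k}\langle\cdot,\cdot\rangle_{g_2}$ on $k$-forms.

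The $d_{h,f}$ piece is immediate: since $d_{h,f}$ does not depend on the metric, $\|d_{h,f}\omega\|_{g_3}^2 = \int \phi^{n/2-k-1}|d_{h,f}\omega|^2_{g_2}\,d\mu_{g_2} \geq (1-Ch^{\kappa})\|d_{h,f}\omega\|_{g_2}^2$. For the adjoint, set $T := d^*_{h,f,g_3} - d^*_{h,f,g_2}$. In local coordinates $T$ has three sources: (i) the inverse-metric difference $g_3^{ij} - g_2^{ij} = (\phi^{-1}-1)g_2^{ij} = O(h^{\kappa})$ entering the principal part of $hd^*_{g_3}$, producing an $O(h^{1+\kappa})|\partial\omega|$ contribution; (ii) the Christoffel-symbol difference of order $O(d\phi) = O(1)$, which after the prefactor $h$ yields $O(h)|\omega|$; and (iii) the gradient difference $i_{\nabla_{g_3}f} - i_{\nabla_{g_2}f} = O(\phi^{-1}-1) = O(h^{\kappa})$ acting on $\omega$. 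Combining these, $|T\omega|^2 \leq Ch^{2\kappa}|\omega|^2 + Ch^{2+2\kappa}|\partial\omega|^2$ pointwise on $\supp\omega$.

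Applying $\|a+b\|^2 \geq (1-\epsilon)\|a\|^2 - \epsilon^{-1}\|b\|^2$ with $\epsilon = h^{\kappa}$, together with the inner-product rescaling on $(k-1)$-forms, and dividing by $h$, this gives
\[
Q_{h,f,g_3}(\omega) \geq (1-Ch^{\kappa}) Q_{h,f,g_2}(\omega) - Ch^{\kappa-1}\|\omega\|_{g_2}^2 - Ch^{1+\kappa}\|\partial\omega\|_{g_2}^2.
\]
The main obstacle is that $h^{\kappa-1}$ is far larger than the target error $h^{1-\kappa}$ in the range $\frac{1}{3}<\kappa<\frac{1}{2}$, so this term has to be absorbed into $Ch^{\kappa}Q_{h,f,g_2}$ rather than left as a residue.

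To carry out the absorption I use the crucial fact that $\bar y_j \in S_+$ is a critical point only of $f|_{\partial M}$, not of $f$ itself, so $|\nabla f|^2 \geq c_0 > 0$ on the small set $\supp\omega$. Combining the representation \eqref{E:MainQuad2} with a weighted trace inequality $\|\omega|_{\partial M}\|^2 \leq C(h^{-\kappa}\|\omega\|_{g_2}^2 + h^{\kappa}\|\partial_n\omega\|_{g_2}^2)$ (valid because $\supp\omega \subset \{x_n \geq -C_0 h^{\kappa}\}$), I obtain a lower bound of the form $Q_{h,f,g_2}(\omega) \geq c h^{-1}\|\omega\|_{g_2}^2 - Ch^{\kappa}\|\partial\omega\|_{g_2}^2$. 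This, together with a G\aa rding-type estimate $h\|\partial\omega\|^2 \leq C(Q_{h,f,g_2}(\omega) + h^{-1}\|\omega\|_{g_2}^2)$, lets me convert both $Ch^{\kappa-1}\|\omega\|^2$ and $Ch^{1+\kappa}\|\partial\omega\|^2$ into multiples of $h^{\kappa}Q_{h,f,g_2}$ at the price of a residual $Ch^{1-\kappa}\|\omega\|_{g_2}^2$ term, yielding the claim. The exponent $1-\kappa$ emerges as the balance between the trace loss factor $h^{-\kappa}$ and the G\aa rding gain factor $h$.
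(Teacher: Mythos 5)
Your proposal diverges from the paper's proof at a crucial point, and the divergence is not just stylistic: it introduces an error that the subsequent machinery cannot repair. The paper (following Helffer--Nier, Lemma 3.3.8) exploits the fact that under the conformal change $g_3 = e^\varphi g_2$, the \emph{entire} full codifferential $d^*_{h,f,g_3} = hd^*_{g_3}+i_{\nabla_{g_3}f}$ picks up a single multiplicative factor $e^{-\varphi}$: both $hd^*_{g_3}$ and $i_{\nabla_{g_3}f}$ scale in exactly the same way, and this factor cancels against the $e^{+\varphi}$ weights appearing in the $g_3$-density and pointwise $g_3$-norm. After the cancellation the \emph{only} residual is the term $h\, i_{\nabla(\frac{n}{2}-p)\varphi}\,\omega$, which carries an explicit prefactor $h$ with $|\nabla\varphi| = O(1)$; feeding it through the $\varepsilon$-weighted Cauchy--Schwarz inequality with $\varepsilon = h^\kappa$ gives, after dividing by $h$, an error of exactly $Ch^{1-\kappa}\|\omega\|_{g_2}^2$. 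In your direct decomposition of $T := d^*_{h,f,g_3}-d^*_{h,f,g_2}$, item (iii) -- the discrepancy in $i_{\nabla f}$ -- is recorded as an $O(h^\kappa)$ zeroth-order operator with \emph{no} $h$ prefactor. That records the $e^{-\varphi}-1 = O(h^\kappa)$ relative factor as an absolute error and misses precisely the cancellation that makes the lemma true. It yields a residual $Ch^{\kappa-1}\|\omega\|_{g_2}^2$, larger than the target $Ch^{1-\kappa}\|\omega\|_{g_2}^2$ by the divergent factor $h^{2\kappa-2}$.

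The patching scheme you then propose does not close this gap. You want to absorb $Ch^{\kappa-1}\|\omega\|^2$ and $Ch^{1+\kappa}\|\partial\omega\|^2$ into $Ch^\kappa Q_{h,f,g_2}(\omega)+Ch^{1-\kappa}\|\omega\|^2$ using a trace inequality and a G\aa rding estimate. But the G\aa rding estimate $h\|\partial\omega\|^2 \leq C(Q_{h,f,g_2}(\omega)+h^{-1}\|\omega\|^2)$, fed back into your coercivity bound $Q_{h,f,g_2}(\omega)\geq ch^{-1}\|\omega\|^2 - Ch^\kappa\|\partial\omega\|^2$, produces $\|\omega\|^2 \leq C(h Q_{h,f,g_2}(\omega) + h^\kappa Q_{h,f,g_2}(\omega) + h^{\kappa-1}\|\omega\|^2)$; for $\kappa<1$ the coefficient $h^{\kappa-1}$ on the right exceeds $1$ for small $h$, so the inequality is vacuous. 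The iteration does not contract -- each pass reintroduces a factor of $h^{\kappa-1}$ -- and there is no choice of intermediate exponent in the trace inequality that simultaneously controls the negative boundary term (which arises because $\partial f/\partial\bar n>0$ at $\bar y_j\in S_+$, so the boundary contribution to \eqref{E:MainQuad2} is negative) and yields a gain large enough to offset the $h^{\kappa-1}$ loss. In short, the missing idea is the structural identity behind the citation of Lemma 3.3.8: the codifferential comparison must be organized so that the discrepancy is $h\cdot(\text{bounded operator})$, not $h^\kappa\cdot(\text{bounded operator})$.
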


\begin{proof} Let $\displaystyle e^{\varphi(x)} = \frac
{(g_2)_{nn}(0)}{(g_2)_{nn}(x')}$, then $g_3(x) = e^{\varphi(x)}
g_2(x)$. Note that $\varphi(0) = 0$. Since $g_2 =g_3$ when $|x'|
\geq 2 h^\kappa$, and both $g_2$ and $g_3$ are $x_n-$independent,
$\varphi(x) = O(h^{\kappa})$ and $e^{\varphi(x)} = 1 +
O(h^\kappa)$ everywhere. Therefore,
\begin{equation}\label{E:estvarphi}
\min\{e^{\varphi(x)}\} = 1 + O(h^{\kappa}).
\end{equation}
Given a metric $g$, the volume form is $V_{g}(x)= (\det
g(x))^{\frac{1}{2}} dx_1\wedge ... \wedge dx_n$. Then,
\begin{equation*}
V_{g_3}(x)= e^{\frac{n}{2} \varphi(x)} V_{g_2}(x).
\end{equation*}
For any $p$-forms $\omega$ and $\eta$
\begin{equation*}
\omega \wedge *_{g_3}\eta = e^{(\frac{n}{2}-p)\varphi(x)}\omega
\wedge *_{g_2}\eta,
\end{equation*}
see (pp. 26-27 \cite{HN}). Therefore,
\begin{equation*}
\parallel \omega \parallel _{g_3} \geq e^{(\frac{n}{2}-p)\min\varphi(x)}
\parallel \omega \parallel _{g_2}.
\end{equation*}
\smallskip

Since $d_{h,f}$ does not depend on $g$, we have
\begin{equation*}
\parallel d_{h,f} \omega \parallel_{g_3} ^2 \geq
e^{(\frac{n}{2}-p-1)\min\varphi(x)} \parallel d_{h,f} \omega
\parallel_{g_2} ^2.
\end{equation*}
Thus by \eqref{E:estvarphi},
\begin{equation}\label{E:estd}
\parallel d_{h,f} \omega \parallel_{g_3} ^2 \geq (1-C'_1
h^{\kappa}) \parallel d_{h,f} \omega
\parallel_{g_2} ^2.
\end{equation}
\smallskip

We also have
\begin{equation}\label{E:dstar}
\parallel d^{*}_{h,f,g_3} \omega \parallel_{g_3} ^2 \geq
e^{(3p-1+n)\min\varphi(x)} \parallel d^{*}_{h,f,g_2} \omega +h
i_{\nabla (\frac{n}{2}-p)\varphi(x)}\omega \parallel_{g_2} ^2,
\end{equation}
see the proof of Lemma 3.3.8 in \cite{HN}.\\

 Now we will use the following well known inequality,
\begin{equation*}
\parallel f+g \parallel ^2 \leq
(1+\varepsilon) \parallel f \parallel ^2+ (1+
\frac{1}{\varepsilon})\parallel g
\parallel ^2,
\end{equation*}
which implies that
\begin{equation*}
\parallel f \parallel ^2 \geq
\frac{1}{1+\varepsilon} \parallel f+g \parallel ^2- \frac{1}
{\varepsilon}\parallel g
\parallel ^2.
\end{equation*}
Replacing $\varepsilon$ by $h^ \kappa$, $f$ by $f+g$ and $g$
by $-g$, we get
\begin{equation*}
\parallel f+g \parallel ^2 \geq
(1-C_1 h^\kappa) \parallel f \parallel ^2- C_2
h^{-\kappa}\parallel g \parallel ^2,
\end{equation*}
which implies that
\[\begin{array}{rl}
\parallel d^{*}_{h,f,g_2} \omega &+h i_{\nabla
(\frac{n}{2}-p)\varphi(x)}\omega  \parallel_{g_2} ^2\\&
\\&\geq (1-C_1 h^\kappa) \parallel d^{*}_{h,f,g_2} \omega
\parallel_{g_2} ^2- C_2 h^{-\kappa}\parallel h i_{\nabla
(\frac{n}{2}-p)\varphi(x)}\omega \parallel_{g_2}^2\\&\\
 &\geq (1-C_1 h^\kappa) \parallel d^{*}_{h,f,g_2} \omega
 \parallel_{g_2} ^2- C_2 h^{2-\kappa}\parallel \omega \parallel_{g_2}
 ^2.
\end{array}
\]
This together with the equation \eqref{E:estvarphi} and the
inequality \eqref{E:dstar} imply

\begin{equation*}
\parallel d^{*}_{h,f,g_3} \omega \parallel_{g_3} ^2 \geq (1-C_1
h^{\kappa})\parallel d^{*}_{h,f,g_2} \omega
\parallel_{g_2} ^2 -C_2 h^{2 - \kappa} \parallel \omega
\parallel_{g_2} ^2.
\end{equation*}
Together with \eqref{E:estd} we have
\begin{eqnarray*}
Q_{h,f,g_3} (\omega) &=& \frac{1}{h}(\parallel d_{h,f} \omega
\parallel _{g_3} ^2 +\parallel d^{*}_{h,f,g_3} \omega
\parallel _{g_3}^2)\\
&\geq& (1-C_1 h^{\kappa}) Q_{h,f,g_2} (\omega) -C_2 h^{1 - \kappa}
\parallel \omega \parallel_{g_2} ^2.
\end{eqnarray*} \end{proof}
\bigskip

Note that for the inequality
\begin{equation}\label{E:app4}
Q_{h,f,g_2} (\omega) \geq (1-C_1 h^{\kappa}) Q_{h,f,g_3} (\omega)
-C_2 h^{1 - \kappa} \parallel \omega \parallel_{g_3} ^2
\end{equation}
it is enough to see that
\[\min\{e^{-\varphi(x)}\} =
(\max\{e^{\varphi(x)}\})^{-1}= (1 + O(h^{\kappa}))^{-1} = 1 +
O(h^{\kappa}).\]\\

Now we will continue with the proof of the main result.\\

We combine formula \eqref{E:equality} with inequalities \eqref{E:app1}, \eqref{E:app2},
\eqref{E:app3}, and \eqref{E:app4}, to conclude that there exist positive constants $C_1$ and $C_2$
such that

\begin{align}\label{E:app5}
Q_2 (i_2(\phi_j^{(h)}(x) \omega)) \geq& (1-C_1 h^{\kappa})
Q_{h,f_j,g_3} (i_1(\phi_j^{(h)}(x) \omega)\wedge \alpha)\\ &-C_2
h^{\kappa}\parallel i_1(\phi_j^{(h)}(x) \omega)\wedge
\alpha\parallel_{g_3} ^2 \notag
\end{align}
and
\begin{align}\label{E:app6}
Q_{h,f_j,g_3} (i_1(\phi_j^{(h)}(x) \omega)\wedge \alpha) \geq&
(1-C_1 h^{\kappa}) Q_2 (i_2(\phi_j^{(h)}(x) \omega))\\ &-C_2
h^{\kappa} \parallel i_2(\phi_j^{(h)}(x) \omega)\parallel_{g} ^2
\notag
\end{align}
for all $\omega \in \mathcal{H}_0$.\\

Note that $(g_3)_{nn} = g_{nn}(0)$ and $(g_3)_{lk}$ depends only
on $x'$ for $l,k = 1, ... , n-1$. Therefore, in the metric $g_3$ the
variables separate on the forms $\psi =i_1(\phi_j^{(h)}(x) \omega)\wedge
\alpha $.  In particular, let $ f'_j =
f\mid_{\partial M}$, $g'_3$ be  the restriction of the metric $g_3$
on the tangent space spanned by $\{{\partial /
\partial x_1}, {\partial / \partial x_2},...,{\partial / \partial
x_{n-1}}\}$, $f_j^{(n)} = x_n$, $g_{3}^{(n)} = g_{nn}(0) dx_n^2$.
Then for any $\omega $ from the component of $\mathcal{H}_0$
corresponding to the boundary critical points we have
\begin{align}\label{E:sep}
Q_{h, f_j, g_3}((i _1 \phi_j^{(h)}(x') \omega) \wedge \alpha) = &
\,Q_{h, f'_j, g'_3}(i _1 \phi_j^{(h)}(x')\omega) \parallel \alpha
\parallel_{g_3^{(n)}} ^2\\
&+\parallel i _1 \phi_j^{(h)}(x') \omega \parallel_{g'_3} ^2 Q_{h,
f_j^{(n)}, g_3^{(n)}}(\alpha)\notag.
\end{align}

We recall that from normalization we have $\parallel \alpha
\parallel_{g_3} ^2 =1$. Moreover,\\
$Q_{h, f_j^{(n)}, g_{3}^{(n)}}
(\alpha)\leq C h^{3 \kappa - 1}$ for some $C>0$.
Indeed,
\begin{equation*}
Q_{h, f_j^{(n)}, g_{3}^{(n)}}(\alpha)= \frac{1}{h} \parallel
d^{*}_{h, f_j^{(n)}, g_{3}^{(n)}} \alpha  \parallel
_{g_{3}^{n}}^{2},
\end{equation*}
where the norm and the adjoint are taken with respect to the metric
$g_{3}^{(n)}$ on $\mathbb R_{-}^{n}$. Since $d^{*}_{h, f_j^{(n)},
g_{3}^{(n)}}(e^{\frac{x_n}{h}}dx_n)=0$, we have
\begin{equation*}
Q_{h, f_j^{(n)}, g_{3}^{(n)}}(\alpha)\leq C_1 h(c(h))^2\parallel e^{\frac{x_n}{h}}\frac{\partial
\phi^{(h)}_n}{\partial x_n}\parallel ^{2}_{g_{3}^{(n)}},
\end{equation*}
where $C_{1}$ is a constant independent of $h$, and $c(h)$ is normalization constant for
$\phi^{(h)}_n(x_n)e^{\frac{x_n}{h}}dx_n$ with respect to the
metric $g_{3}^{(n)}$.
 Since the support of $\displaystyle\frac{\partial
\phi^{(h)}_n}{\partial x_n}$ is in the interval $(-2h^{\kappa},
-h^{\kappa})$,
\begin{equation*}
Q_{h, f_j^{(n)}, g_{3}^{(n)}}(\alpha) \leq P(h) e^{-h^{\kappa -1}}
\end{equation*}
where $P(h)$ is a polynomial. Thus there exist $h_0>0$ small
enough so that for any $h \in (0,h_0)$, $Q_{h, f_j^{(n)},
g_{3}^{(n)}} (\alpha)\leq C h^{3 \kappa - 1}$ for some $C>0$.
 Therefore,
\begin{align}\label{E:app7}
Q_{h, f_j, g_3}((i _1 \phi_j^{(h)}(x) \omega) \wedge \alpha) \leq&
\,Q_{h, f'_j, g'_3}(i _1 \phi_j^{(h)}(x) \omega)\\
&+ C h^{3\kappa-1}
\parallel i _1 \phi_j^{(h)}(x) \omega
\parallel_{g'_3} ^2,\notag
\end{align}
and
\begin{align}\label{E:app8}
Q_{h, f'_j, g'_3}(i _1 \phi_j^{(h)}(x) \omega) \leq&\, Q_{h, f_j,
g_3}((i _1 \phi_j^{(h)}(x) \omega) \wedge \alpha)\\
&+ C h^{3\kappa-1}
\parallel i _1 \phi_j^{(h)}(x) \omega \wedge \alpha
\parallel_{g_3} ^2\notag.
\end{align}\\

Now we will compare the quadratic forms $Q_{h, f'_j, g'_3}$ and
$Q_j$. The quadratic forms $Q_{h, f'_j, g'_3}$ can be written as
\begin{equation*}
\begin{aligned}
Q_{h,f'_j,g'_3}(\omega)= &h \sum_{l,k=1}^{n-1} \left(
A^{(2)}_{2,lk} \frac{\partial}{\partial x_{l}} \omega,
\frac{\partial}{\partial x_{k}}\omega \right) _{g'_3} + h
\sum_{l=1}^{n-1} \left(A^{(1)}_{2,l} \frac{\partial}{\partial
x_{l}} \omega, \omega\right) _{g'_3}\\
&\displaystyle +(B_{2} \omega , \omega)_{g'_3}+ h^{-1} (V_{2}
\omega, \omega)_{g'_3}
\end{aligned}
\end{equation*}
where $A^{(2)}_{2,lk} = (g'_3)^{lk}(x')$, $A^{(1)}_{2,l}$ is a
first order operator. Operator $B_2$ is bounded, and it can be written as
$$B_{2} = \sum_{l,k=1}^{n-1} \frac{\partial^2
f'_j}{\partial x_{l}\partial x_{k}}(x')~ (a^{*k} a^{l}-a^l a^{*k})
- \sum_{l}^{n-1}(\nabla f'_j)_l(x') B'_{l}(x').$$ Finally,
$$V_{2}= \sum_{l,k=1}^{n-1} (g'_3)^{kl}(x') \frac{\partial
f'_j}{\partial x_{l}}(x')\frac{\partial
f'_j}{\partial x_{k}}(x').$$\\

In coordinates the quadratic form $Q_1$ for the model
operator $\Delta_{mod}$ (see \eqref{E:ModelOp}) on
$L^2(\mathbb{R}^{n-1}, \Lambda T^*\mathbb{R}^{n-1})$ corresponding
to $\bar{y}_j$ can be written as
\begin{equation*}
Q_1(\omega)= h(A^{(2)}_{1,lk} \frac{\partial}{\partial x_{l}}
\omega, \frac{\partial}{\partial x_{k}} \omega)_{g_j'} + (B_1
\omega , \omega)_{g_j'}+ h^{-1} (V_1 \omega, \omega)_{g_j'}
\end{equation*}
where $A^{(2)}_{1,lk} = (g_j')^{lk}$,
$$ B_1 =\sum_{l,k=1}^{n-1}
 \frac{\partial^2 f_j'}{\partial x_{l}\partial
x_{k}}(\bar y_j)~ (a^{*k} a^{l}-a^l a^{*k}),$$ and
$$V_1=\sum_{l,k,r,s=1}^{n-1} (g_j')^{rs} \frac{\partial^2
f_j'}{\partial x_{r} \partial x_{l}}(\bar{y}_j) \frac{\partial^2
f_j'}{\partial x_{s} \partial x_{k}}(\bar{y}_j) x_l x_k.$$\\

Since $g'_3(0)=g_j'$, $f'_j$ is a Morse functions on
$\partial M$, and $\bar{y}_j$ corresponds to the origin, we have that
\begin{eqnarray*}
A_{2,lk}(0)=A_{1,lk}(0),\quad B_2(0)=B_1(0),\quad V_2(0)=V_1(0)=0,\\
\frac{\partial V_2}{\partial x^l}(0)=\frac{\partial V_1}{\partial
x^l}(0)=0, \quad l=1,2,\ldots,n,\\ \frac{\partial^2 V_2}{\partial
x^l\partial x^k}(0)=\frac{\partial^2 V_1}{\partial x^l \partial
x^k}(0), \quad l,k=1,2,\ldots,n.
\end{eqnarray*}\\
These equalities together with Lemma 2.11 \cite{KMS} imply the
following lemma. (Lemma 2.11 is proved in \cite{KMS} in the case
of operators,
but its proof can be easily extended to include the case of quadratic forms).\\

Let $\phi \in C_c^\infty (\mathbb{R}^{n-1})$  be a function
satisfying $0 \leq \phi \leq 1$, $\phi(x)=1$ if $|x| \leq 1$,
$\phi(x)=0$ if $|x| \geq 2$.
Define $\phi^{(h)}(x)= \phi(h^{-\kappa}x)$.\\

\begin{lemma}\label{L: n-1 part} Let $1/3 < \kappa < 1/2$.
There exist $C>0$ and $h_0$ such that
for any $h \in (0, h_0)$,
\begin{equation}\label{E: n-1 part}
Q_{h, f'_j, g'_3} (\phi^{(h)}\omega ) \leq (1+C h^{\kappa}) Q_{j}
(\phi^{(h)} \omega ) +C h^{3 \kappa -1} (\phi^{(h)} \omega
,\phi^{(h)} \omega )_{g'_j}
\end{equation}
for $\omega \in C_c^\infty (\mathbb{R}^{n-1}; \Lambda
T^*\mathbb{R}^{n-1})$ such that ${\rm supp}\, \omega \subset \{ x
\in
\mathbb{R}^{n-1}: |x| \leq C_0 h^{\kappa}\}$ for some constant $C_0>0$.\\
\end{lemma}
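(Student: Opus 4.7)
The plan is to estimate the quadratic form $Q_{h,f'_j,g'_3}$ term by term against $Q_j$, exploiting that all coefficients in the two forms agree at the origin to the precise orders listed just above the lemma: $A^{(2)}_{2,lk}(0)=(g'_j)^{lk}$, $B_2(0)=B_1(0)$, $V_2-V_1$ vanishes to order three at $0$, and that on the support of $\phi^{(h)}\omega$ one has $|x'|\leq 2h^\kappa$. In the language of Lemma~2.11 of \cite{KMS}, this is a matching-of-coefficients estimate, and the heart of the argument is a coordinate-wise Taylor expansion.

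First, I would split
\begin{equation*}
Q_{h,f'_j,g'_3}(\phi^{(h)}\omega)-Q_j(\phi^{(h)}\omega)
\end{equation*}
into four contributions: (i) the principal kinetic piece $h([A^{(2)}_{2,lk}(x')-(g'_j)^{lk}]\partial_l\omega,\partial_k\omega)_{g'_3}$; (ii) the extra first-order term $h(A^{(1)}_{2,l}\partial_l\omega,\omega)_{g'_3}$ coming from the derivatives of the metric, which has \emph{no counterpart} in $Q_j$; (iii) the zero-order piece $([B_2(x')-B_1(0)]\omega,\omega)_{g'_3}$; and (iv) the potential piece $h^{-1}([V_2(x')-V_1(x')]\omega,\omega)_{g'_3}$, plus a harmless error coming from replacing the measure of $g'_3$ by that of $g'_j$, which contributes an $O(h^\kappa)$ relative factor. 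For (i) and (iii), Taylor expansion at $0$ gives $|A^{(2)}_{2,lk}(x')-(g'_j)^{lk}|+|B_2(x')-B_1(0)|\leq C|x'|=O(h^\kappa)$ on the support, so these two differences are controlled by $Ch^\kappa\,Q_j(\phi^{(h)}\omega)$ plus $Ch^\kappa\|\phi^{(h)}\omega\|^2_{g'_j}$. For (iv), the matching of Taylor expansions through order two yields $|V_2(x')-V_1(x')|\leq C|x'|^3=O(h^{3\kappa})$, so the contribution is at most $Ch^{3\kappa-1}\|\phi^{(h)}\omega\|^2_{g'_j}$, which is precisely the remainder allowed in the statement.

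The main obstacle is the extra first-order term (ii), since $Q_j$ has no first-order piece to match it against. I would handle it by a weighted Cauchy--Schwarz with parameter $h^\kappa$:
\begin{equation*}
|h(A^{(1)}_{2,l}\partial_l\omega,\omega)_{g'_3}|
\leq Ch^{1+\kappa}\|\nabla\omega\|^2_{g'_3}+Ch^{1-\kappa}\|\omega\|^2_{g'_3}.
\end{equation*}
The first summand is at most $Ch^\kappa$ times the kinetic part of $Q_j(\phi^{(h)}\omega)$ (using the ellipticity estimate that $h\|\nabla(\phi^{(h)}\omega)\|^2\leq Q_j(\phi^{(h)}\omega)+Ch^{2\kappa-1}\|\phi^{(h)}\omega\|^2$, the latter dominated by $Ch^{3\kappa-1}\|\phi^{(h)}\omega\|^2$ since $\kappa<1/2$), and is thus absorbed into the prefactor $(1+Ch^\kappa)$. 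The second summand satisfies $h^{1-\kappa}\leq Ch^{3\kappa-1}$ precisely when $\kappa\leq 1/2$, so it falls into the admissible remainder.

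Combining the four bounds and re-arranging yields
\begin{equation*}
Q_{h,f'_j,g'_3}(\phi^{(h)}\omega)
\leq (1+Ch^\kappa)Q_j(\phi^{(h)}\omega)+Ch^{3\kappa-1}\|\phi^{(h)}\omega\|^2_{g'_j},
\end{equation*}
as required. The two restrictions on $\kappa$ each play a clear role: $\kappa<1/2$ ensures the first-order and principal-coefficient errors do not dominate the cubic error $h^{3\kappa-1}$ of the potential, while $\kappa>1/3$ ensures that $h^{3\kappa-1}\to 0$, which will be essential for applying Theorem~\ref{t:equivalence} with parameters that produce the final $h^{1/5}$ rate in Theorem~\ref{T:SpecClose}.
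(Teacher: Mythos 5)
Your argument is correct and is, in substance, the content of Lemma~2.11 of \cite{KMS}: the paper's own proof consists precisely of verifying the coefficient identities you list (agreement of $A^{(2)}$, $B$, and of $V$ together with its first and second derivatives at the origin) and then citing that lemma, whereas you re-derive it — the term-by-term Taylor expansion on $|x'|\lesssim h^\kappa$, the weighted Cauchy--Schwarz with parameter $h^\kappa$ for the unmatched first-order term, and the resulting remainders $O(h^\kappa)Q_j$ and $O(h^{3\kappa-1})\|\cdot\|^2$. One small phrasing slip: the parenthetical claim that $Ch^{2\kappa-1}\|\phi^{(h)}\omega\|^2$ is ``dominated by'' $Ch^{3\kappa-1}\|\phi^{(h)}\omega\|^2$ is false as written (for $0<h<1$ and $\kappa<1/2$ the inequality goes the other way); what your computation actually uses, and what is correct, is that after multiplying by the prefactor $h^\kappa$ one obtains $h^\kappa\cdot h^{2\kappa-1}=h^{3\kappa-1}$, which lands in the admissible remainder.
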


The proof of Lemma 2.11 \cite{KMS} is symmetric with respect to
the quadratic forms $Q_{h, f'_j, g'_3}$ and $Q_{j}$, so after dividing by $(1+Ch^{\kappa })$, we have that
\begin{equation*}
Q_{h, f'_j, g'_3} (i_1 \phi_j^{(h)}(x) \omega) \geq (1-C_1
h^{\kappa}) Q_{j} (i_1 \phi_j^{(h)}(x) \omega) -C_2 h^{3\kappa -1}
\parallel i_1 \phi_j^{(h)}(x) \omega \parallel_{g'  _j} ^2
\end{equation*}
and
\begin{equation*}
Q_{j} (i_1 \phi_j^{(h)}(x) \omega) \geq (1-C_1 h^{\kappa}) Q_{h,
f'_j, g'_3} (i_1 \phi_j^{(h)}(x)\omega) -C_2 h^{3\kappa -1}
\parallel i_1 \phi_j^{(h)}(x)\omega \parallel_{g'_3} ^2
\end{equation*}
for any $\omega \in D$ and for some $C_1, C_2 >0$.\\

All the metrics we used differ from each other by multiplication
by $(1+O(h^{\kappa}))$ in $h^{\kappa}$-neighborhood of the points
in $S$. Therefore these two inequalities together with
\eqref{E:app5}, \eqref{E:app6}, \eqref{E:app7}, \eqref{E:app8}
imply \eqref{E:betaepsilon01} and \eqref{E:betaepsilon02}.\\

The comparison of quadratic forms around the interior critical
points is completely similar to the comparison of the  forms
$Q_{h, f'_j, g'_2}$ and $Q_j$. Thus for any interior point $x_i
\in S_0$, Lemma 2.11 \cite{KMS} implies that
\begin{equation}\label{E:intbetaepsilon01} Q_{h,f,g} (i_2
\psi_i^{(h)}(x)  \omega) \geq (1-C_1 h^{\kappa}) Q_{i} (i_1
\psi_i^{(h)}(x)  \omega) -C_2 h^{3 \kappa -1}
\parallel i_1\psi_i^{(h)}(x)  \omega
\parallel_{g_i} ^2
\end{equation}
and
\begin{equation}\label{E:intbetaepsilon02}
Q_{i} ( i_1 \psi_i^{(h)}(x)  \omega) \geq (1-C_1 h^{\kappa})
Q_{h,f,g} ( i_2 \psi_i^{(h)}(x)  \omega) -C_2 h^{3 \kappa- 1}
\parallel  i_2 \psi_i^{(h)}(x)  \omega
\parallel_{g} ^2
\end{equation}
for $\omega \in D \cap (\oplus_{i=1}^{N_0} L^2(B_i(0, r), \Lambda
T^*\mathbb{R}^n|_{B_i(0, r)}))$ and for some $C_1, C_2 >0$.
Indeed, for interior critical point
these estimates are the same as in the case of manifolds without boundary  (see \cite{S1}). \\

Therefore we have
\begin{equation}\label{E:betaepsilon1}
Q_2 (i_2 J \omega) \geq (1-C_1 h^{\kappa}) Q_1 (i_1 J \omega) -C_2
h^{3 \kappa -1}\parallel i_1 J \omega \parallel_{g_j } ^2
\end{equation}
and
\begin{equation}\label{E:betaepsilon2}
Q_1 ( i_1 J \omega) \geq (1-C_1 h^{\kappa}) Q_2 ( i_2 J  \omega)
-C_2 h^{3 \kappa- 1} \parallel  i_2 J \omega \parallel_{g} ^2
\end{equation}
for $\omega \in D$ and for some $C_1, C_2 >0$. These inequalities imply \eqref{e:16} and \eqref{e:A1A2}.\\

The following lemma verifies inequality \eqref{e:14}.
\begin{lemma}\label{L:realalpha} Let $\omega \in D(Q_2)$
 then there exists $h_0$ such that  for all $h \in
(0, h_0)$
\begin{equation*}
Q_2(J'_2\omega)\geq C h^{2\kappa -1} \|J'_2\omega\|_g^2
\end{equation*}
for some $C>0$.
\end{lemma}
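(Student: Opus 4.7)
The strategy is to decompose $J'_2\omega$ by an IMS-type partition of unity and estimate each piece separately, using either Lemma \ref{L:alpha} or a direct spectral estimate near the relevant critical point. The essential point is that $J'_2$ is constructed precisely to annihilate the approximate ground states of the Witten Laplacian at the critical points in $S_0\cup S_+$, so $J'_2\omega$ ought to see a large quadratic form.

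Take a smooth partition of unity $\{\theta_0,\theta_i,\theta'_j\}$ on $M$ with $\sum_k\theta_k^2=1$, where $\theta_0$ is supported in $E$, each $\theta_i$ is supported in $B(\bar x_i,2h^\kappa)$ and equals $1$ on $B(\bar x_i,h^\kappa)$, and each $\theta'_j$ is supported in $C(\bar y_j,2h^\kappa)$ and equals $1$ on $C(\bar y_j,h^\kappa)$. With $\sigma:=J'_2\omega$, the IMS localization formula (cf.\ (11.37) in \cite{CFKS}) gives
\[
Q_2(\sigma)=\sum_k Q_2(\theta_k\sigma)-h\sum_k\||d\theta_k|\,\sigma\|_g^2,
\]
and since $|d\theta_k|=O(h^{-\kappa})$, the error is $O(h^{1-2\kappa})\|\sigma\|^2$, which is negligible compared to the target $h^{2\kappa-1}\|\sigma\|^2$ because $\kappa<1/2$.

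The piece $\theta_0\sigma$ is supported in $E$ and is handled directly by Lemma \ref{L:alpha}. On a neighborhood of an interior critical point $\bar x_i$, the operator $J'_2$ acts as multiplication by $\tilde\psi_i^{(h)}=\sqrt{1-(\psi_i^{(h)})^2}$, so $\theta_i\sigma$ is supported in the annulus $\{h^\kappa\le|x-\bar x_i|\le 2h^\kappa\}$, where $|\nabla f|^2\ge c h^{2\kappa}$. Expanding $Q_2$ via \eqref{E:MainQuad2} (the boundary integrals vanish by interior support), the potential term $h^{-1}(|\nabla f|^2\theta_i\sigma,\theta_i\sigma)_g$ dominates the bounded Lie-derivative term and yields $Q_2(\theta_i\sigma)\ge c h^{2\kappa-1}\|\theta_i\sigma\|^2$.

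The main work is with the pieces $\theta'_j\sigma$ near a boundary critical point $\bar y_j\in S_+$. In the special coordinates \eqref{E:f}--\eqref{E:g}, first pass to the separating model metric $g_3$ by analogs of \eqref{E:app5}--\eqref{E:app8} (applied in the appropriate direction); the incurred errors are $O(h^\kappa)$ and harmless. In this metric the quadratic form decouples into tangential and normal parts according to \eqref{E:sep}. Decompose $\theta'_j\sigma=\mu_L+\mu_\perp$ with $\mu_L=\theta'_j\tilde\phi_j^{(h)}(x')\omega_L\in L$ and $\mu_\perp=\theta'_j\omega_\perp\in L^\perp$. The piece $\mu_L$ is tangentially cut off to $|x'|\ge h^\kappa$, where $|\nabla' f'_j|^2\ge ch^{2\kappa}$; applying the preceding interior-critical-point argument to the boundaryless tangential Witten Laplacian $\Delta_{h,f'_j,g'_3}$ on $\partial M$ yields the required $h^{2\kappa-1}$ bound for $Q_2(\mu_L)$. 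For $\mu_\perp$, the form $\alpha$ is (up to exponentially small errors from the cutoff $\phi_n^{(h)}$) the zero-eigenvalue ground state of the one-dimensional normal Witten Laplacian, whose next spectrum lies at level of order $h^{-1}$; since $\mu_\perp$ is orthogonal to $\alpha$ in the normal direction, the normal contribution to $Q_2(\mu_\perp)$ is bounded below by $ch^{-1}\|\mu_\perp\|^2$, which is far stronger than needed.

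The main obstacle lies in this final step: carefully tracking the cross-terms between $\mu_L$ and $\mu_\perp$ (orthogonal in $L^2$ but not a priori in the non-decoupled form), and verifying that the chain of metric-comparison estimates runs in the direction needed here. Both should succumb to adaptations of the techniques already in play in Section \ref{S:MainThmAndProof}.
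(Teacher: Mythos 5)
Your proof follows the same broad strategy as the paper's (localize, then estimate each local piece using a spectral gap of a reduced operator), but organizes the boundary-critical-point analysis differently.  The paper splits a form supported on $C(\bar y_j,h^\kappa)$ into its tangential and normal parts on $\partial M$: for the tangential piece the boundary integral in \eqref{E:MainQuad2} is absent (the Dirichlet condition $\mathbf t\omega=0$ forces it to vanish on $\partial M$) and the \emph{constant} lower bound $|\nabla f|^2\geq c>0$ near $\bar y_j$ gives $Q_2\geq c\,h^{-1}\|\cdot\|^2$ immediately, while the normal piece in the image of $J'_2$ inside $C(\bar y_j,h^\kappa)$ automatically lies in $L^\perp$ and is treated via the $O(h^{-1})$ gap of the $1$D normal Witten Laplacian.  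You instead split $\theta'_j\sigma$ as $L\oplus L^\perp$, handle the $L$-piece through the tangential operator on $\partial M$ (using $|\nabla'f'_j|^2\geq ch^{2\kappa}$ on the support of $\tilde\phi_j^{(h)}$), and the $L^\perp$-piece through the normal gap.  The $L$-contribution you estimate directly is, in the paper, implicitly subsumed into Lemma~\ref{L:alpha}, since $J'_2$ pushes $L$-forms into $\{|x'|\geq h^\kappa\}\subset E$; your explicit IMS partition is in fact more careful than the paper, which leaves the gluing of the local estimates implicit.

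Two points that need to be made explicit before the argument closes.  First, the cross-term worry you flag is resolved precisely by the metric-comparison chain you sketch: once you pass to $g_3$, the quadratic form decouples via \eqref{E:sep}, and the $x_n$-factor $\alpha$ diagonalizes the normal operator up to exponentially small errors coming from $\phi_n^{(h)}$; hence $L\oplus L^\perp$ is (approximately) $Q_{h,f_j,g_3}$-orthogonal, not merely $L^2$-orthogonal.  Second, for $\mu_\perp\in L^\perp$ the justification ``orthogonal to $\alpha$ in the normal direction'' only covers the degree-$1$ normal factor; $L^\perp$ also contains forms with a $0$-form $x_n$-factor $u(x_n)$ (tangential forms), for which orthogonality to $\alpha$ is degree-automatic and carries no information.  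For those one needs, as the paper implicitly does, that the Dirichlet condition $u(0)=0$ inherited from $\mathbf t\omega=0$ makes the $0$-form normal operator $\frac{1}{h}\|(h\partial_{x_n}+1)u\|^2$ have no zero mode and a gap of order $h^{-1}$.  With these two clarifications your argument is sound and, for the $L$-piece, offers a more transparent route than relying on Lemma~\ref{L:alpha}.
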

\begin{proof} Let $\omega$ be a form supported on $C(\bar{y}_j, h^\kappa)$.
First we assume that $\omega$ restricts to a tangential form on the boundary, that is $\omega =
\mathbf{t}\omega$. Then the boundary integral term in
\eqref{E:MainQuad2} vanishes because of the boundary condition $\mathbf{t}\omega=0$.
Therefore,
\begin{equation*}
Q_{h,f,g}(\omega)=h\left(\parallel d \omega \parallel_{g} ^2
+\parallel d^{*} \omega
\parallel_{g} ^2\right)+\left((\mathcal{L}_{\nabla f} +
\mathcal{L}_{\nabla f}^{*}) \omega , \omega
\right)_{g}+h^{-1}\left(|\nabla f|^2
\omega ,\omega \right)_{g}.\\
\end{equation*}
Since $|\nabla f|> C$ around $\bar{y}_j$ for some positive
constant $C$, we have
$$h^{-1}\left(|\nabla f|^2 \omega ,\omega
\right)_{g} > C h^{-1}\|\omega\|_g^2.$$ Therefore,
for tangential forms we have that
\begin{equation}\label{e:realalphaapp1}
Q_2(J'_2\omega)\geq C h^{2\kappa -1} \|J'_2\omega\|_g^2
\end{equation}
for some $C>0$.\\

Now, let $\omega$ be a form that restricts to a normal form on the
boundary, that is $\omega = \mathbf{n}\omega$. In the special
local coordinates on $C(\bar{y}_j, h^\kappa)$, consider the forms
that can be written as  $\omega'(x') \wedge \tilde{\alpha}(x_n)$,
where $\tilde{\alpha}$ is a 1-form that belongs to the
$L^2$-orthogonal complement (with respect to the metric
$g_{nn}(0)dx_n^2$) of the one dimensional space generated by
$\alpha$ in the space of all 1-forms supported in the interval
$(-2h^{\kappa}, 0]$ in $H^1({\mathbb R}_{-}, \Lambda T^*{\mathbb
R}_{-})$. Since the inequalities \eqref{E:app1} and \eqref{E:app4}
are valid for any differential form with the support in a small
neighborhood of critical points on the boundary, we have that
\begin{equation*}
Q_2(\omega)= Q_{h,f_j, g_1} (\omega) \geq (1-C h^{\kappa})
Q_{h,f_j,g_3} (\omega) -C h^{\kappa}
\parallel \omega \parallel_{g_3} ^2.
\end{equation*}
Therefore,
\begin{equation*}
Q_2 (\omega' \wedge \tilde{\alpha})  \geq  (1-C_1 h^{\kappa})~
Q_{h,f_j,g_3} (\omega' \wedge \tilde{\alpha}) -C_2 h^{\kappa}
\parallel \omega' \wedge \tilde{\alpha} \parallel_{g_3} ^2.
\end{equation*}
After separating variables as in \eqref{E:sep}  and observing that
 $Q_{h,f'_j,g'_3}$ is a positive quadratic form, we obtain
\begin{equation*}
Q_2 (\omega' \wedge \tilde{\alpha}) \geq (1-C_1 h^{\kappa})\parallel \omega'
\parallel_{g'_3} ^2 Q_{h,f^{(n)}_j,g_3^{(n)}} (\tilde{\alpha}) -C_2 h^{\kappa}
\parallel \omega' \wedge \tilde{\alpha} \parallel_{g_3} ^2.
\end{equation*}
A simple calculation shows that the spectrum of the quadratic form
$Q_{h,f^{(n)}_j,g_3^{(n)}}$ on $H^1({\mathbb R}_{-}, \Lambda
T^*{\mathbb R}_{-})$ is $\{0\} \cup [Ch^{-1}, \infty)$. The
eigenspace corresponding to the $0$ eigenvalue is the one
dimensional space generated by the eigenform $\exp(x_{n}/h)dx_n$.
Since the forms $\alpha =C(h) \phi_n^{(h)}(x_n)\exp(x_{n}/h) dx_n$
(see \eqref{E:alpha} ) and $C(h)\exp(x_{n}/h) dx_n$ are equal for
$-h^{-\kappa}<x_n\leq 0$ and the function $\exp(x_{n}/h)$
decreases exponentially fast when $h\to 0$ and $x_n\leq
-h^{-\kappa}$,  we conclude that
\begin{equation*}
Q_{h,f^{(n)}_j,g_3^{(n)}} (\tilde{\alpha}) \geq h^{-1}
\parallel \tilde{\alpha} \parallel_{g_3^{(n)}} ^2
\end{equation*}
which implies that
\begin{equation*}
Q_2 (\omega' \wedge \tilde{\alpha}) \geq C h^{-1}
\parallel \omega' \wedge \tilde{\alpha} \parallel_{g_3} ^2.
\end{equation*}
\smallskip

Since any normal form supported in $C(\bar{y}_j, h^\kappa)$ which
belongs to the image of $J'_2$ can be approximated by the forms
$\omega'(x') \wedge \tilde{\alpha}(x_n)$ in special local
coordinates and the metrics $g$ and $g_3$ differ from each other
by $O(h^{\kappa})$, for any tangential form $\omega$ we have
\begin{equation}\label{e:realalphaapp2}
Q_2 (J'_2\omega ) \geq C h^{-1}
\parallel J'_2\omega \parallel_{g} ^2
\end{equation}
for some $C>0$.\\

The inequalities \eqref{e:realalphaapp1} and
\eqref{e:realalphaapp2} together with the lemma \ref{L:alpha}
imply the desired inequality.
\end{proof}
\smallskip

Now we can apply Theorem \ref{t:equivalence}. From
\eqref{E:betaepsilon1} and \eqref{E:betaepsilon2}, $\beta_l= 1+
O(h^{\kappa})$ and $\epsilon_l=O(h^{3\kappa -1})$ for $l= 1, 2$.
By the Lemma \ref{L:realalpha}, $\alpha_2 = O(h^{2 \kappa -1})$
and by the Lemma \ref{L:modalpha}, $\alpha_1 =O(h^{2 \kappa -1})$.
The operators $A_1= \Delta_{\textrm{mod}}$ and $A_2=\Delta_{h,f,g}$ are elliptic operators with positive definite principal symbols, thus $A_1$ and $A_2$
are bounded from below so $\lambda_{0l}=O(1)$.  Moreover, we have
$\gamma_l = O(h^{1- 2\kappa})$. Now assume that
$(a_1, b_1)$ does not intersect with the spectrum of $A_1$. For
$a_2$, $b_2$ given by formulas \eqref{e:a2}, \eqref{e:b2}
respectively, we have
\begin{equation*}
a_2 = a_1 + O(h^{s}), \quad b_2= b_1 + O(h^{s})
\end{equation*}
where $s= \min \{3\kappa -1, 1- 2\kappa\}$. The best possible
value of $s$ is
\begin{equation}\label{E:s}
s= \max_{\kappa} \min\{3\kappa -1, 1- 2\kappa\} = \frac{1}{5}
\end{equation}
which is attained when $\kappa = 2/5$. By Theorem
\ref{t:equivalence}, the interval $(a_2, b_2)$ does not intersect
with the spectrum of $A_2$. Moreover, for any $\lambda_1 \in (a_1,
b_1)$ and $\lambda_2 \in (a_2, b_2)$, $\dim(\textrm{Im}
E_1(\lambda_1)) = \dim(\textrm{Im} E_2(\lambda_2))$. Assume that
there are $M$ eigenvalues of the model operator $A_1$ lower than
$R$ and let $a_1$ be the highest eigenvalue of $A_1$ lower than
$R$. Since $R \notin \spec(A_1)$, there exists $h_0 > 0$ such that
for all $h \in (0, h_0)$, $a_2 = a_1 + O(h^{s})$ is less then $R$.
Then $\dim(\textrm{Im} E_1(R)) = \dim(\textrm{Im} E_2(R))$ which
implies $A_2$ also has exactly $M$ eigenvalues lower than $R$.
Since we can do this for any $R \notin \spec(A_1)$, we can
conclude that the eigenvalues of $A_2$ concentrates in the
$h^s$-neighborhood of the eigenvalues of $A_1$ and for any
$\lambda \in \spec(A_1)$, $A_2$ has exactly as many eigenvalues in
the $h^{s}$-neighborhood of $\lambda$ as the multiplicity of
$\lambda$. This implies Theorem \ref{T:SpecClose}.\\

\begin{remark} There exists an isomorphism between the spectral
spaces\\
${\rm Im} E_1(\lambda_1)$ and ${\rm Im} E_2(\lambda_1)$ which is
given in the proof of Theorem 3.1 in the Appendix.
\end{remark}

\section{Eigenvalues of the Model Operator}\label{S:EV}

Recall that the model operator does not depend on the choice of
local coordinates, see Section \ref{S:Pre}. Thus we will choose
local coordinates in which the model operator has an especially
simple form.\\

At each critical point $\bar{x}_i \in S_0$, let us choose Morse
coordinates $x_1, ... , x_n$ for $f$ near $\bar{x}_i$. In these
coordinates $\bar{x}_i =0$, the metric at $\bar{x}_i$ is Euclidean
i.e. $g_i= \sum_{r=1}^{n} dx_r^2$ and for some non vanishing real
constants $c_r$, $r= 1, ..., n-1$,
\[
f(x)= f(0) + \frac{1}{2} \sum_{r=1}^{n} c_r x_r^2.
\]
Let $f_i$ be the extension of $f$ to $\mathbb{R}^{n}$ with the
same
formula.\\

At each boundary critical point $\bar{y}_j \in S_+$, let us choose
Morse coordinates $x_1, ... , x_{n-1}$ for $f\mid_{\partial M}$ near
$\bar{y}_j$. In these coordinates $\bar{y}_j =0$, the metric at
$\bar{y}_j$ restricted to the tangential vectors is Euclidean i.e.
$g_j= \sum_{r=1}^{n-1} dx_r^2$ and for some non vanishing real
constants $d_r$, $r= 1, ..., n-1$,
\[
f\mid_{\partial M}(x)= f(0) + \frac{1}{2} \sum_{r=1}^{n-1} d_r
x_r^2.
\]
Let $f_j$ be the extension of $f\mid_{\partial M}$ to
$\mathbb{R}^{n-1}$ with the same formula.\\

In these coordinates the operators $\Delta_i$ and $\Delta_j$ can
be written as
\begin{equation*}
\Delta_i = -h \sum_{k=1}^{n} \frac{\partial^2}{
\partial x_k^2}~ + \sum_{k=1}^{n} c_k~ (a^{*k} a^{k}-a^k
a^{*k}) +h^{-1}\sum_{k=1}^{n} c^2_k x^2_k
\end{equation*}
and
\begin{equation*}
\Delta_j = -h \sum_{k=1}^{n-1} \frac{\partial^2} {
\partial x_k^2} + \sum_{k=1}^{n-1} d_k~ (a^{*k}
a^{k}-a^k a^{*k}) +h^{-1}\sum_{k=1}^{n-1} d^2_k x^2_k,
\end{equation*}
where $a^k=(dx^{k})^*$ and $a^{*k}=(a^k)^*$ are the fermionic creation and annihilation operators.
The spectrum of the model operator $\Delta_{\textrm{mod}}$ is the
union of the spectra of the operators $\oplus_{i=1}^{N_0}
\Delta_{i}$ and $\oplus_{j=1}^{N} \Delta_{j}$.\\

The spectra of the operators $\Delta_{i}$ and $\Delta_{j}$ are the
same as in the case of manifolds without boundary (see \cite{S2}).\\

The spectrum of $\Delta_{i}$ is
\begin{eqnarray}\label{E:spec(Deltai)}
\{\sum_{l=1}^{n}(2k_l+1)c_l +(c_{l_1}+ ... +
c_{l_k})-(c_{l_{k+1}}+ ... + c_{l_n})\}
\end{eqnarray}
where $k_l \in \{0,1,2,...\}$,  $l_1<...<l_k$, $ l_{k+1}< ... <
l_n$, $\{l_1 , ... , l_k\} \cup
\{l_{k+1} , ... , l_n\} = \{1 , ... , n\}$ (Corollary 2.22 in \cite{S2}).\\

The spectrum of $\Delta_{j}$ is
\begin{eqnarray}\label{E:spec(Deltaj)}
\{\sum_{l=1}^{n-1}(2k_l+1)d_l +(d_{l_1}+ ... +
d_{l_k})-(d_{l_{k+1}}+ ... + d_{l_{n-1}})\}
\end{eqnarray}
where $k_l \in \{0,1,2,...\}$,  $ l_{k+1}< ... < l_{n-1}$, $\{l_1
, ... , l_k\}
\cup \{l_{k+1} , ... , l_{n-1}\} = \{1 , ... , n-1\}$ (Corollary 2.22 in \cite{S2}).\\

The spectrum of the model operator is the union of
\eqref{E:spec(Deltai)} and \eqref{E:spec(Deltaj)}
over $i=1,...,N_0$ and $j=1,...,N$ respectively.\\

\section{Appendix}\label{S:appendix}

\subsection{Localization theorem for spectral projections}
The goal of this Section is to prove
Proposition~\ref{p:localization} below, which we need for the
proof of Theorem~\ref{t:equivalence}.\\

Let $Q$ be a closed bounded below quadratic form on a Hilbert
space ${\mathcal H}$ with the domain $D(Q)$ which is assumed to be
dense in ${\mathcal H}$. Let $A$ be the self-adjoint
operator corresponding to the quadratic form.\\

Let us take $\lambda_{0}\leq 0$ such that
\begin{equation}\label{e:03}
Q(\omega) \geq \lambda_{0} \|\omega\|^2, \quad \omega \in D(Q).
\end{equation}\\

Let $J$ be a self-adjoint bounded operator in ${\mathcal H}$ that
maps the domain of $Q$ into itself, $J: D (Q)\to D (Q)$. We assume
that $0\leq J\leq {\rm id}_{{\mathcal H}}$. Introduce a
self-adjoint positive bounded operator $J'$ in ${\mathcal H}$ by
the formula $J^2+(J')^2={\rm id}_{{\mathcal H}}$. We assume that
$J'$ maps the domain of $Q$ into itself, the quadratic forms
$Q(\omega)-(Q(J \omega) + Q(J' \omega))$ are bounded i.e. there
exists $\gamma \geq 0$ such that
\begin{equation}\label{e:05}
Q(J \omega) + Q(J' \omega)-Q(\omega) \leq \gamma \|\omega\|^2,
\quad \omega \in D(Q).
\end{equation}\\

Finally, we assume that
\begin{equation}\label{e:04}
Q(J'\omega)\geq \alpha \|J'\omega\|^2,\quad \omega\in D(Q),
\end{equation}
for some $\alpha>0$.\\

Denote by $E(\lambda)$ the spectral projection of the operator
$A$, corresponding to the semi-axis $(-\infty,\lambda]$. We have
\begin{equation}\label{e:01}
Q(E(\lambda)\omega)\leq \lambda\|E(\lambda)\omega\|^2, \quad
\omega\in {\mathcal H}.
\end{equation}\\

\begin{proposition}\label{p:localization}
If $\alpha>\lambda+\gamma$, then we have the following estimate
\begin{equation}\label{e:06}
\|JE(\lambda)\omega\|^2\geq
\frac{\alpha-\lambda-\gamma}{\alpha-\lambda_0}\|E(\lambda)\omega\|^2,
\quad \omega\in {\mathcal H}.
\end{equation}
\end{proposition}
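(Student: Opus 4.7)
\textbf{Proof plan for Proposition~\ref{p:localization}.} The plan is to set $\eta = E(\lambda)\omega$ and combine the four hypotheses linearly, using the orthogonal decomposition $\|\eta\|^2 = \|J\eta\|^2 + \|J'\eta\|^2$ that comes from $J^2+(J')^2 = \mathrm{id}$ (together with $J, J'$ being self-adjoint). The goal is to turn everything into a single inequality relating $\|J\eta\|^2$ and $\|J'\eta\|^2$, which will give the desired lower bound on $\|J\eta\|^2 / \|\eta\|^2$.

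First I will note that since $\eta \in \mathrm{Im}\, E(\lambda)$, the spectral inequality \eqref{e:01} yields $Q(\eta) \leq \lambda \|\eta\|^2$. Applying the IMS-type bound \eqref{e:05} then gives
\begin{equation*}
Q(J\eta) + Q(J'\eta) \leq Q(\eta) + \gamma \|\eta\|^2 \leq (\lambda+\gamma)\|\eta\|^2.
\end{equation*}
On the other hand, the lower bound \eqref{e:03} applied to $J\eta \in D(Q)$ and the coercivity \eqref{e:04} applied to $J'\eta \in D(Q)$ together give
\begin{equation*}
\lambda_0 \|J\eta\|^2 + \alpha \|J'\eta\|^2 \leq Q(J\eta) + Q(J'\eta).
\end{equation*}

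Chaining these two inequalities and using $\|\eta\|^2 = \|J\eta\|^2 + \|J'\eta\|^2$ gives
\begin{equation*}
\lambda_0 \|J\eta\|^2 + \alpha \|J'\eta\|^2 \leq (\lambda+\gamma)\bigl(\|J\eta\|^2 + \|J'\eta\|^2\bigr),
\end{equation*}
i.e., under the hypothesis $\alpha > \lambda + \gamma$,
\begin{equation*}
(\alpha - \lambda - \gamma)\|J'\eta\|^2 \leq (\lambda + \gamma - \lambda_0)\|J\eta\|^2.
\end{equation*}
Adding $(\alpha-\lambda-\gamma)\|J\eta\|^2$ to both sides and again using $\|\eta\|^2 = \|J\eta\|^2+\|J'\eta\|^2$ yields $(\alpha-\lambda-\gamma)\|\eta\|^2 \leq (\alpha-\lambda_0)\|J\eta\|^2$, which rearranges to \eqref{e:06}.

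There is no real obstacle here; the only point to be a bit careful about is the orthogonal decomposition $\|\eta\|^2 = \|J\eta\|^2 + \|J'\eta\|^2$, which uses self-adjointness of $J$ and $J'$ together with the relation $J^2 + (J')^2 = \mathrm{id}$ (so that $(J\eta,J\eta) + (J'\eta,J'\eta) = (J^2\eta,\eta) + ((J')^2\eta,\eta) = (\eta,\eta)$), and verifying that $J\eta,\,J'\eta \in D(Q)$ so that \eqref{e:03} and \eqref{e:04} may legitimately be applied --- both of which follow directly from the standing assumptions that $J$ and $J'$ preserve $D(Q)$.
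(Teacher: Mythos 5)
Your proof is correct and uses essentially the same argument as the paper: both combine \eqref{e:01}, \eqref{e:05}, \eqref{e:03}, \eqref{e:04} with the Pythagorean decomposition $\|E(\lambda)\omega\|^2 = \|JE(\lambda)\omega\|^2 + \|J'E(\lambda)\omega\|^2$, differing only in bookkeeping (you symmetrize the four estimates into one linear inequality, while the paper first bounds $\|J'E(\lambda)\omega\|^2$ and then substitutes). The algebra is identical after rearrangement.
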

\smallskip
\begin{remark}
Note that in the case $\lambda<\lambda_0$ the statement is
trivial. In the opposite case $\lambda\geq\lambda_0$, since
$\alpha>\lambda+\gamma$ and $\gamma\geq 0$, the coefficient in the
right-hand side of the formula (\ref{e:06}) satisfies the estimate
\[
0<\frac{\alpha-\lambda-\gamma}{\alpha-\lambda_0}\leq 1.
\]
\end{remark}

\bigskip
\begin{proof}(of Proposition ~\ref{p:localization})  Combining \eqref{e:03}, \eqref{e:05} ,\eqref{e:04}
and \eqref{e:01} we get
\begin{align*}
\|J'E(\lambda)\omega\|^2 &\leq
\frac{1}{\alpha}Q(J'E(\lambda)\omega)\\ &\leq
\frac{1}{\alpha}\left(Q(E(\lambda)\omega)-Q(J
E(\lambda)\omega) + \gamma\|E(\lambda)\omega\|^2\right)\\
&\leq \frac{1}{\alpha}\left((\lambda+\gamma)\|E(\lambda)\omega\|^2
-\lambda_0\|J E(\lambda)\omega\|^2 \right).
\end{align*}
Hence, we have
\begin{equation*}
\|JE(\lambda)\omega\|^2=\|E(\lambda)\omega\|^2-\|J'E(\lambda)\omega\|^2
\geq
\left(1-\frac{\lambda+\gamma}{\alpha}\right)\|E(\lambda)\omega\|^2
+\frac{\lambda_0}{\alpha}\|JE(\lambda)\omega\|^2,
\end{equation*}
that immediately implies the required estimate \eqref{e:06}.
\end{proof}
\bigskip

\begin{corollary}
If $\alpha>\lambda+\gamma$, then we have the following estimate:
\begin{equation}\label{e:010}
\|J'E(\lambda)\omega\|^2\leq
\frac{\lambda+\gamma-\lambda_0}{\alpha-\lambda_0}\|E(\lambda)\omega\|^2,
\quad \omega \in {\mathcal H}.
\end{equation}
\end{corollary}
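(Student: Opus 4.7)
The corollary is a direct consequence of Proposition~\ref{p:localization} combined with the orthogonal decomposition $\|JE(\lambda)\omega\|^2+\|J'E(\lambda)\omega\|^2=\|E(\lambda)\omega\|^2$, so I do not anticipate any genuine obstacle. The plan is simply to record this reduction cleanly.

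The first step is to observe that since $J$ and $J'$ are self-adjoint bounded operators on $\mathcal{H}$ satisfying $J^2+(J')^2=\mathrm{id}_{\mathcal{H}}$, one has for every $\eta\in\mathcal{H}$ the identity
\begin{equation*}
\|J\eta\|^2+\|J'\eta\|^2=(J^2\eta,\eta)+((J')^2\eta,\eta)=(\eta,\eta)=\|\eta\|^2.
\end{equation*}
Applying this with $\eta=E(\lambda)\omega$ gives $\|J'E(\lambda)\omega\|^2=\|E(\lambda)\omega\|^2-\|JE(\lambda)\omega\|^2$.

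The second step is to insert the lower bound from Proposition~\ref{p:localization}, namely $\|JE(\lambda)\omega\|^2\geq \tfrac{\alpha-\lambda-\gamma}{\alpha-\lambda_0}\|E(\lambda)\omega\|^2$, into this identity. A short algebraic simplification,
\begin{equation*}
1-\frac{\alpha-\lambda-\gamma}{\alpha-\lambda_0}=\frac{\lambda+\gamma-\lambda_0}{\alpha-\lambda_0},
\end{equation*}
yields the desired estimate \eqref{e:010}. Alternatively, if one prefers not to appeal to the proposition as a black box, one could reprove the corollary directly by estimating $\|J'E(\lambda)\omega\|^2\leq \alpha^{-1}Q(J'E(\lambda)\omega)$ via \eqref{e:04}, then using \eqref{e:05} and \eqref{e:01} to bound $Q(J'E(\lambda)\omega)\leq (\lambda+\gamma)\|E(\lambda)\omega\|^2-\lambda_0\|JE(\lambda)\omega\|^2$, and finally substituting $\|JE(\lambda)\omega\|^2=\|E(\lambda)\omega\|^2-\|J'E(\lambda)\omega\|^2$ and solving for $\|J'E(\lambda)\omega\|^2$; both routes produce the same bound. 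The only conceptual point worth highlighting is that the hypothesis $\alpha>\lambda+\gamma$, inherited from Proposition~\ref{p:localization}, guarantees $\lambda+\gamma-\lambda_0<\alpha-\lambda_0$, so that the coefficient on the right-hand side of \eqref{e:010} lies in $[0,1)$, consistent with the trivial bound $\|J'E(\lambda)\omega\|^2\leq\|E(\lambda)\omega\|^2$.
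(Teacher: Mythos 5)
Your proof is correct and follows exactly the paper's own route: the corollary is derived from Proposition~\ref{p:localization} together with the identity $\|J\eta\|^2+\|J'\eta\|^2=\|\eta\|^2$. The alternative direct argument you sketch is a fine consistency check but is not needed.
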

\bigskip
\begin{proof} This follows immediately from the equality
$\|J\omega\|^2+\|J'\omega\|^2=\|\omega\|^2$ for any $\omega\in
{\mathcal H}$.
\end{proof}
\bigskip
\begin{corollary}
If $\alpha>\lambda+\gamma$, then we have the following estimate
\begin{equation}\label{e:011}
Q(JE(\lambda)\omega)\leq
\left(\lambda+\gamma-\lambda_0\frac{\lambda+\gamma-\lambda_0}{\alpha-\lambda_0}
\right)\|E(\lambda)\omega\|^2,\quad \omega\in {\mathcal H}.
\end{equation}
\end{corollary}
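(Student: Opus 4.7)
The plan is to chain together the three structural inequalities \eqref{e:05}, \eqref{e:01}, \eqref{e:03} with the previously established estimate \eqref{e:010}. This is a purely formal manipulation, with no new analytic input needed.

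First I would apply the IMS-type bound \eqref{e:05} with $\omega$ replaced by $E(\lambda)\omega$, rewritten as
\[
Q(JE(\lambda)\omega) \leq Q(E(\lambda)\omega) - Q(J'E(\lambda)\omega) + \gamma\|E(\lambda)\omega\|^2.
\]
On the right-hand side I would then bound the two $Q$-terms in opposite directions: for the first, use the spectral estimate \eqref{e:01} to get $Q(E(\lambda)\omega) \leq \lambda\|E(\lambda)\omega\|^2$; for the second, use the lower bound \eqref{e:03} in the form $Q(J'E(\lambda)\omega) \geq \lambda_0\|J'E(\lambda)\omega\|^2$. Combining these yields
\[
Q(JE(\lambda)\omega) \leq (\lambda+\gamma)\|E(\lambda)\omega\|^2 - \lambda_0\|J'E(\lambda)\omega\|^2.
\]

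The only nontrivial step is that I need an upper bound on the term $-\lambda_0 \|J'E(\lambda)\omega\|^2$. Since $\lambda_0 \leq 0$ we have $-\lambda_0 \geq 0$, so this is monotone in $\|J'E(\lambda)\omega\|^2$, and I can apply the previous corollary \eqref{e:010} to substitute
\[
-\lambda_0\|J'E(\lambda)\omega\|^2 \leq -\lambda_0 \cdot \frac{\lambda+\gamma-\lambda_0}{\alpha-\lambda_0}\|E(\lambda)\omega\|^2,
\]
which is legitimate precisely under the hypothesis $\alpha > \lambda+\gamma$ that makes \eqref{e:010} available. Inserting this into the previous line gives \eqref{e:011} directly.

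There is no real obstacle here; the proof is essentially two substitutions and a sign check on $\lambda_0$. The only thing to be careful about is preserving inequality directions when multiplying by $-\lambda_0$, which is why the assumption $\lambda_0\leq 0$ from \eqref{e:03} (rather than an arbitrary lower bound) matters. The entire argument fits comfortably in four or five lines of display math.
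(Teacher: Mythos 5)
Your proof is correct and coincides step for step with the paper's own argument: both chain the IMS estimate \eqref{e:05}, the spectral upper bound \eqref{e:01}, the lower bound \eqref{e:03} (used with the sign of $-\lambda_0$), and then the preceding corollary's bound \eqref{e:010} on $\|J'E(\lambda)\omega\|^2$. You even flag the one point where care is needed, namely that $\lambda_0\le 0$ makes the final substitution go in the right direction, which is exactly what the paper relies on.
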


\bigskip
\begin{proof} From \eqref{e:05} ,\eqref{e:04}, \eqref{e:01}
and \eqref{e:010} we get
\begin{align*}
Q(J E(\lambda)\omega)&\leq Q(E(\lambda)\omega)-Q(J'E(\lambda)\omega)+ \gamma\|E(\lambda)\omega\|^2\\
&\leq
\left((\lambda+\gamma)\|E(\lambda)\omega\|^2-\lambda_0\|J'E(\lambda)\omega\|^2
\right)\\
&\leq
\left(\lambda+\gamma-\lambda_0\frac{\lambda+\gamma-\lambda_0}{\alpha-\lambda_0}
\right)\|E(\lambda)\omega\|^2
\end{align*}
as desired.
\end{proof}
\bigskip
\subsection{Proof of Theorem~\ref{t:equivalence}} In this Section,
we will use the notation of Section~\ref{s:abstract-equiv}. We
start with the following

\begin{proposition}\label{p:closedimage}
If $\alpha_1>\lambda_1+\gamma_1$ and $$ \lambda_2>\rho\left[
\beta_1 \left(\lambda_1+\gamma_1+
\frac{(\lambda_1+\gamma_1-\lambda_{01})^2}{\alpha_1-\lambda_1-\gamma_1}\right)+
\varepsilon_1\right],
$$ then there exists $\varepsilon_0 >0$
such that
\begin{equation*}
\|E_2(\lambda_2)i_2Jp_1E_1(\lambda_1)\omega\|_2^2\geq
\varepsilon_0 \|E_1(\lambda_1)\omega\|_1^2, \quad \omega\in
{\mathcal H}_1.
\end{equation*}
\end{proposition}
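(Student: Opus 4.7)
Set $\eta = E_1(\lambda_1)\omega$ and $\zeta = i_2 J p_1 \eta \in \mathcal{H}_2$; the target inequality is $\|E_2(\lambda_2)\zeta\|_2^2 \geq \varepsilon_0\|\eta\|_1^2$. The plan is to control $J_1\eta = i_1 J p_1\eta$ on the $\mathcal{H}_1$ side using the appendix results already at hand, transfer the estimates to $\mathcal{H}_2$ via the comparison hypotheses \eqref{e:rho} and \eqref{e:16}, and finally extract the spectral projection of $\zeta$ using the quadratic-form dichotomy at level $\lambda_2$.

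First I would apply Proposition \ref{p:localization} with $\lambda = \lambda_1$ on $\mathcal{H}_1$: since $\alpha_1 > \lambda_1 + \gamma_1$, it supplies the lower mass bound $\|J_1\eta\|_1^2 \geq \frac{\alpha_1 - \lambda_1 - \gamma_1}{\alpha_1 - \lambda_{01}}\|\eta\|_1^2$, together with the companion upper bound $Q_1(J_1\eta) \leq K_1\|\eta\|_1^2$ from Corollary \eqref{e:011}. The norm equivalence \eqref{e:rho} applied to $p_1\eta \in \mathcal{H}_0$ converts the first into $\|\zeta\|_2^2 \geq \rho^{-2}\|J_1\eta\|_1^2 \geq c_0\|\eta\|_1^2$, and the form comparison \eqref{e:16}, combined with $\|J_1\eta\|_1 \leq \|\eta\|_1$ (which holds because $0 \leq J_1 \leq {\rm id}$ is self-adjoint), converts the second into $Q_2(\zeta) \leq (\beta_1 K_1 + \varepsilon_1)\|\eta\|_1^2$.

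For the final extraction I split $\zeta = E_2(\lambda_2)\zeta + (I - E_2(\lambda_2))\zeta$, which is orthogonal both in $\mathcal{H}_2$ and with respect to $Q_2$. Using $Q_2(E_2(\lambda_2)\zeta) \geq \lambda_{02}\|E_2(\lambda_2)\zeta\|_2^2$ together with $Q_2((I - E_2(\lambda_2))\zeta) \geq \lambda_2\|(I - E_2(\lambda_2))\zeta\|_2^2$, one rearranges to
$$(\lambda_2 - \lambda_{02})\|E_2(\lambda_2)\zeta\|_2^2 \geq \lambda_2\|\zeta\|_2^2 - Q_2(\zeta).$$
Inserting the bounds from the previous paragraph, the right-hand side is a strictly positive multiple of $\|\eta\|_1^2$ precisely when the hypothesis on $\lambda_2$ holds, and dividing by $\lambda_2 - \lambda_{02} > 0$ yields the required $\varepsilon_0$.

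The main obstacle I expect is the algebraic bookkeeping rather than any genuinely new analytic ingredient: one must verify that the factors $\rho^{-2}$, $\beta_1$, $\varepsilon_1$ and the constant coming out of Corollary \eqref{e:011} reassemble, via the identity $\alpha_1 - \lambda_{01} = (\alpha_1 - \lambda_1 - \gamma_1) + (\lambda_1 + \gamma_1 - \lambda_{01})$, into the particular expression $\lambda_1 + \gamma_1 + (\lambda_1 + \gamma_1 - \lambda_{01})^2/(\alpha_1 - \lambda_1 - \gamma_1)$ appearing in the statement. If one is too lazy at the upper-bound step (using merely $Q_1(J_1\eta) \leq (\lambda_1+\gamma_1-\lambda_{01})\|\eta\|_1^2$), the resulting threshold for $\lambda_2$ will be larger than advertised; the sharper Corollary \eqref{e:011} is exactly what is needed to recover the stated expression.
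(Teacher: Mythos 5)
Your proposal follows the same route as the paper's proof: (e:06) for the lower bound on $\|J_1\eta\|_1$, Corollary (\ref{e:011}) for the sharper upper bound on $Q_1(J_1\eta)$, (\ref{e:rho}) to transfer to $\mathcal H_2$, (\ref{e:16}) for the cross-form comparison, and the dichotomy $Q_2(\zeta) \ge \lambda_{02}\|E_2(\lambda_2)\zeta\|_2^2 + \lambda_2\|(I-E_2(\lambda_2))\zeta\|_2^2$. The structure is right and you correctly flag that (\ref{e:011}), rather than the crude $Q_1(J_1\eta)\le(\lambda_1+\gamma_1-\lambda_{01})\|\eta\|_1^2$, is what reassembles into $\lambda_1+\gamma_1 + (\lambda_1+\gamma_1-\lambda_{01})^2/(\alpha_1-\lambda_1-\gamma_1)$.

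However, there is a second, subtler coarsening that you do not flag and that degrades your threshold. In your step ``$Q_2(\zeta) \le (\beta_1 K_1 + \varepsilon_1)\|\eta\|_1^2$'' you have already replaced $\varepsilon_1\|J_1\eta\|_1^2$ by $\varepsilon_1\|\eta\|_1^2$. After that, dividing through by $c_0 = \rho^{-1}\frac{\alpha_1-\lambda_1-\gamma_1}{\alpha_1-\lambda_{01}}$ turns the $\varepsilon_1$ into $\varepsilon_1\frac{\alpha_1-\lambda_{01}}{\alpha_1-\lambda_1-\gamma_1} > \varepsilon_1$, so the positivity condition you obtain is
\begin{equation*}
\lambda_2 \;>\; \rho\!\left[\beta_1\!\left(\lambda_1+\gamma_1+\frac{(\lambda_1+\gamma_1-\lambda_{01})^2}{\alpha_1-\lambda_1-\gamma_1}\right)
\;+\;\varepsilon_1\,\frac{\alpha_1-\lambda_{01}}{\alpha_1-\lambda_1-\gamma_1}\right],
\end{equation*}
which is strictly stronger than the stated hypothesis. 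To recover the Proposition exactly, do what the paper does: leave the term $\varepsilon_1\|J_1\eta\|_1^2$ unestimated, combine it with $\lambda_2\rho^{-1}\|J_1\eta\|_1^2$ coming from (\ref{e:rho}) and the dichotomy to form $(\lambda_2\rho^{-1}-\varepsilon_1)\|J_1\eta\|_1^2$, and only \emph{then} apply (\ref{e:06}) to this single grouped term. The positivity condition then reads $(\lambda_2\rho^{-1}-\varepsilon_1)\frac{\alpha_1-\lambda_1-\gamma_1}{\alpha_1-\lambda_{01}} > \beta_1 K_1$, which after multiplying through is exactly the hypothesis on $\lambda_2$. (Separately, you write $\|\zeta\|_2^2\ge \rho^{-2}\|J_1\eta\|_1^2$ while the paper uses $\rho^{-1}$ in its chain (\ref{e:21}); that discrepancy is a normalization slip in the paper's (\ref{e:rho}) versus (\ref{e:a2}) rather than an error on your part, but you should keep whichever power makes the final threshold consistent with (\ref{e:a2}).)
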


\bigskip
\begin{proof} Applying \eqref{e:16} to a function
$p_1E_1(\lambda_1)\omega,~ \omega\in {\mathcal H}_1$ and taking
into account that $J_1=i_1Jp_1$, we get
\begin{equation}\label{e:7}
Q_2(i_2Jp_1E_1(\lambda_1)\omega)\leq \beta_1
Q_1(J_1E_1(\lambda_1)\omega) +\varepsilon_1
\|J_1E_1(\lambda_1)\omega\|_1^2.
\end{equation}

Clearly, for any $\lambda$ and $l=1,2$ we have the estimate
\begin{equation}
\label{e:4} Q_l(({\rm id}_{{\mathcal H}_l}-E_l(\lambda))\omega)
\geq \lambda\|({\rm id}_{{\mathcal
H}_l}-E_l(\lambda))\omega\|_l^2, \quad \omega\in D (Q_l).
\end{equation}

By \eqref{e:4}, \eqref{e:6} and \eqref{e:rho}, it follows that
\begin{align}\label{e:21}
Q_2(&i_2Jp_1E_1(\lambda_1)\omega)\\
&=Q_2(E_2(\lambda_2)i_2Jp_1E_1(\lambda_1)\omega) +Q_2( ({\rm
id}_{{\mathcal H}_2}-E_2(\lambda_2))i_2Jp_1E_1(\lambda_1)\omega)\notag\\
&\geq
\lambda_{02}\|E_2(\lambda_2)i_2Jp_1E_1(\lambda_1)\omega\|_2^2+
\lambda_2\|({\rm id}_{{\mathcal
H}_2}-E_2(\lambda_2))i_2Jp_1E_1(\lambda_1)\omega\|_2^2\notag\\
&= \lambda_2\|i_2Jp_1E_1(\lambda_1)\omega\|_2^2 -
(\lambda_2-\lambda_{02})\|E_2(\lambda_2)i_2Jp_1E_1(\lambda_1)\omega\|_2^2\notag\\
&\geq \lambda_2\rho^{-1}\|J_1E_1(\lambda_1)\omega\|_1^2 -
(\lambda_2-\lambda_{02})\|E_2(\lambda_2)i_2Jp_1E_1(\lambda_1)\omega\|_2^2.\notag
\end{align}

On the other side, by \eqref{e:7}, \eqref{e:011}, we have
\begin{align}\label{e:22}
Q_2(&i_2Jp_1E_1(\lambda_1)\omega) \leq \beta_1
Q_1(J_1E_1(\lambda_1)\omega) +\varepsilon_1
\|J_1E_1(\lambda_1)\omega\|_1^2\\
&\leq \beta_1 \left(\lambda_1+\gamma_1-\lambda_{01}
\frac{\lambda_1+\gamma_1-\lambda_{01}}{\alpha_1-\lambda_{01}}\right)
\|E_1(\lambda_1)\omega\|_1^2 +\varepsilon_1
\|J_1E_1(\lambda_1)\omega\|_1^2.\notag
\end{align}

Combining \eqref{e:21} and \eqref{e:22}, we get
\begin{multline*}
\lambda_2\rho^{-1}\|J_1E_1(\lambda_1)\omega\|_1^2 -
(\lambda_2-\lambda_{02})\|E_2(\lambda_2)i_2Jp_1E_1(\lambda_1)\omega\|_2^2\\
\leq \beta_1 \left(\lambda_1+\gamma_1-\lambda_{01}
\frac{\lambda_1+\gamma_1-\lambda_{01}}{\alpha_1-\lambda_{01}}\right)
\|E_1(\lambda_1)\omega\|_1^2 +\varepsilon_1
\|J_1E_1(\lambda_1)\omega\|_1^2 ,
\end{multline*}
that implies, due to \eqref{e:06},
\begin{align*}
\|E_2(\lambda_2)i_2Jp_1E_1(\lambda_1)\omega\|_2^2 \geq&
\frac{1}{\lambda_2-\lambda_{02}}
\left[(\lambda_2\rho^{-1}-\varepsilon_1)\|J_1E_1(\lambda_1)\omega\|_1^2\right.\\
&- \left. \beta_1 \left(\lambda_1+\gamma_1-\lambda_{01}
\frac{\lambda_1+\gamma_1-\lambda_{01}}{\alpha_1-\lambda_{01}}\right)
\|E_1(\lambda_1)\omega\|_1^2\right] \\ \geq&
\frac{1}{\lambda_2-\lambda_{02}}
\left[(\lambda_2\rho^{-1}-\varepsilon_1)
\frac{\alpha_1-\lambda_1-\gamma_1}{\alpha_1-\lambda_{01}} \right.\\
&- \left. \beta_1 \left(\lambda_1+\gamma_1-\lambda_{01}
\frac{\lambda_1+\gamma_1-\lambda_{01}}{\alpha_1-\lambda_{01}}\right)\right]
\|E_1(\lambda_1)\omega\|_1^2
\end{align*}
as desired.
\end{proof}
\bigskip
\begin{remark}\label{r:closedimage} Note that we only used estimate~(\ref{e:16}) (but
not (\ref{e:A1A2})) in the proof of
Proposition~\ref{p:closedimage}. By using \ref{e:A1A2} we can get
\begin{equation*}
\|E_1(\lambda_1)i_1Jp_2E_2(\lambda_2)\omega\|_1^2\geq
\varepsilon_0 \|E_2(\lambda_2)\omega\|_2^2, \quad \omega\in
{\mathcal H}_2.
\end{equation*}
\end{remark}

\bigskip
\begin{proof}(of Theorem~\ref{t:equivalence}) As above, we
will use the notation of Section~\ref{s:abstract-equiv}. Take
arbitrary $\lambda_1\in (a_1,b_1)$ and $\lambda_2\in (a_2,b_2)$.\\

Since $$ \lambda_2> a_2=\rho\left[ \beta_1 \left(a_1+\gamma_1+
\frac{(a_1+\gamma_1-\lambda_{01})^2}{\alpha_1-a_1-\gamma_1}\right)+
\varepsilon_1\right] $$ and (see Remark \ref{R:thesame})
\begin{align*}
b_1&=\rho\left[ \beta_2 \left(b_2+\gamma_2+
\frac{(b_2+\gamma_2-\lambda_{02})^2}{\alpha_2-b_2-\gamma_2}\right)+
\varepsilon_2\right]\\ & >\rho\left[ \beta_2
\left(\lambda_2+\gamma_2+
\frac{(\lambda_2+\gamma_2-\lambda_{02})^2}{\alpha_2-\lambda_2-\gamma_2}\right)+
\varepsilon_2\right],
\end{align*}
it follows from Proposition~\ref{p:closedimage} that the map $$
E_2(\lambda_2)i_2Jp_1E_1(\lambda_1) =
E_2(\lambda_2)i_2Jp_1E_1(a_1+0) : {\rm Im} E_1(\lambda_1)\to {\rm
Im} E_2(\lambda_2) $$ is injective and from
Remark~\ref{r:closedimage} the map
\begin{eqnarray*}
(E_2(\lambda_2)i_2Jp_1E_1(\lambda_1))^* &=&
E_1(\lambda_1)i_1Jp_2E_2(\lambda_2) \\&=&
E_1(b_1-0)i_1Jp_2E_2(\lambda_2): {\rm Im} E_2(\lambda_2)\to {\rm
Im} E_1(\lambda_1)
\end{eqnarray*}
is injective. Hence, the map
$E_2(\lambda_2)i_2Jp_1E_1(\lambda_1):{\rm Im} E_1(\lambda_1)\to
{\rm Im} E_2(\lambda_2)$ is bijective.\\

Therefore ${\rm dim} ({\rm Im} E_2(\lambda_2))={\rm dim} ({\rm Im}
E_1(\lambda_1))$. Since the spectrum of $A_1$ does not intersect
with $(a_1, b_1)$, ${\rm dim} ({\rm Im} E_1(\lambda_1))$ is
constant for any $\lambda_1 \in (a_1, b_1)$. Therefore ${\rm dim}
({\rm Im} E_2(\lambda_2))$ is constant for any $\lambda_2 \in
(a_2,b_2)$. This implies that the interval $(a_2,b_2)$ does not
intersect with the spectrum of $A_2$.
\end{proof}

\begin{remark}
If  the spectral projections $E_l(\lambda),~ l=1,2$ have finite
rank for all $\lambda $, then we do not need the condition
$(i_2Jp_1)^*=i_1Jp_2$. In this case the part
$(E_2(\lambda_2)i_2Jp_1E_1(\lambda_1))^* =
E_1(\lambda_1)i_1Jp_2E_2(\lambda_2)$ in the proof is not necessary
to conclude ${\rm dim} ({\rm Im} E_2(\lambda_2))={\rm dim} ({\rm
Im} E_1(\lambda_1))$. We can consider the maps $$
E_2(\lambda_2)i_2Jp_1E_1(\lambda_1) =
E_2(\lambda_2)i_2Jp_1E_1(a_1+0) : {\rm Im} E_1(\lambda_1)\to {\rm
Im} E_2(\lambda_2) $$ and
$$E_1(\lambda_1)i_1Jp_2E_2(\lambda_2) =
E_1(b_1-0)i_1Jp_2E_2(\lambda_2): {\rm Im} E_2(\lambda_2)\to {\rm
Im} E_1(\lambda_1). $$ These maps are injective by
Proposition~\ref{p:closedimage} and Remark \ref{r:closedimage}.
Therefore,\\ ${\rm dim} ({\rm Im} E_2(\lambda_2))={\rm dim} ({\rm
Im} E_1(\lambda_1))$ because ${\rm Im}E_1(\lambda_1)$ and ${\rm
Im} E_2(\lambda_2)$ are finite dimensional.
\end{remark}
\bigskip
Denote the spectral projection of the operator $A_l$ on the
interval $(a, b)$ as $E_l(a,b)$ for $l=1,2$.

\begin{corollary}
For any $\lambda \in {\rm spec}\,(A_1)$, there is a positive
number $\epsilon$ which is of order $h^\kappa$ such that the
spaces $E_1((\lambda-\epsilon , \lambda + \epsilon) )$ and
$E_2((\lambda-\epsilon , \lambda + \epsilon) )$ are isomorphic.
\end{corollary}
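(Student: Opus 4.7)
The plan is to derive this corollary from Theorem~\ref{t:equivalence} by bracketing $\lambda$ between two spectral gaps of $A_1$ and transporting them to $A_2$. Since $\spec(A_1)$ is discrete, $\lambda$ is an isolated eigenvalue of finite multiplicity, so there is $d>0$ with $\spec(A_1)\cap(\lambda-2d,\lambda+2d)=\{\lambda\}$. I would first fix $\epsilon$ with $h^{1/5}\ll\epsilon<d$; taking for instance $\epsilon=h^{\kappa'}$ for some fixed $0<\kappa'<1/5$ satisfies both requirements once $h$ is small enough, which matches the claim that $\epsilon$ is of the order of a positive power of $h$.

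Next I apply Theorem~\ref{t:equivalence} to the interval $(a_1,b_1)=(\lambda-3\epsilon/2,\lambda-\epsilon/2)$, which is disjoint from $\spec(A_1)$ by the choice of $d$. The theorem produces a spectral gap $(a_2,b_2)$ of $A_2$ with $a_2=a_1+O(h^{1/5})$ and $b_2=b_1+O(h^{1/5})$, exactly as established in the proof of the main theorem. Since $\epsilon\gg h^{1/5}$, for sufficiently small $h$ the point $\lambda-\epsilon$ lies inside $(a_2,b_2)$; in particular $\lambda-\epsilon\notin\spec(A_2)$ and, by the equality of ranks of spectral projections furnished by Theorem~\ref{t:equivalence},
\[
\dim{\rm Im}\,E_2(\lambda-\epsilon)=\dim{\rm Im}\,E_1(\lambda-\epsilon)=\#\{\mu\in\spec(A_1):\mu<\lambda\},
\]
counted with multiplicity. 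Repeating the argument with $(a_1',b_1')=(\lambda+\epsilon/2,\lambda+3\epsilon/2)$ yields
\[
\dim{\rm Im}\,E_2(\lambda+\epsilon)=\#\{\mu\in\spec(A_1):\mu\le\lambda\}.
\]

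Subtracting the two identities gives
\[
\dim{\rm Im}\,E_2\bigl((\lambda-\epsilon,\lambda+\epsilon)\bigr)=\dim{\rm Im}\,E_1\bigl((\lambda-\epsilon,\lambda+\epsilon)\bigr),
\]
both equal to the multiplicity of $\lambda$ in $\spec(A_1)$. Since both images are finite-dimensional of the same dimension they are isomorphic, and an explicit isomorphism can be extracted from the map $E_2\,i_2Jp_1E_1$ as in the preceding remark. The only delicate point is balancing $\epsilon$: it must dominate the $O(h^{1/5})$ shift produced by Theorem~\ref{t:equivalence} yet remain inside the spectral gap of $A_1$ around $\lambda$; discreteness of $\spec(A_1)$ together with the freedom to shrink $h$ reconciles these two requirements.
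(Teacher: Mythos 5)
Your proof is correct and is the natural route: bracket $\lambda$ with two intervals of $A_1$-spectral-gap, transport each to an $A_2$-gap via Theorem~\ref{t:equivalence}, and use the rank equality of spectral projections at pivots $\lambda\pm\epsilon$ to equate the dimensions of the finite-dimensional projections onto $(\lambda-\epsilon,\lambda+\epsilon)$. The bookkeeping you do to ensure $\lambda\pm\epsilon$ land inside the transported gaps $(a_2,b_2)$ once $\epsilon\gg h^{1/5}$ is exactly the delicate step and you handle it correctly.

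One point is worth stating more bluntly than you do. The corollary as printed asserts $\epsilon$ is of order $h^\kappa$; since $\kappa\in(1/3,1/2)$ throughout the paper, that would make $\epsilon\ll h^{1/5}$ for small $h$, i.e.\ the window would be narrower than the $O(h^{1/5})$ spectral shift established in the proof of Theorem~\ref{T:SpecClose}, and the argument (any argument) would fail. Your fix — take $\epsilon=h^{\kappa'}$ for a fixed $\kappa'\in(0,1/5)$, which dominates the $O(h^{1/5})$ shift yet still sits inside the $h$-independent gap of the discrete $\spec(A_1)$ around $\lambda$ for $h$ small — is what the statement must mean, and your parenthetical ``matches the claim that $\epsilon$ is of the order of a positive power of $h$'' quietly papers over what is in fact a typo in the exponent. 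It is cleaner to say outright that the exponent in the corollary should read $\kappa'<1/5$ (or $s=\min\{3\kappa-1,1-2\kappa\}$ minus a little), not $\kappa$.
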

\bigskip


\begin{thebibliography}{999}

\bibitem{AL} J. Alvarez L\'opez,  \emph{Morse inequalities for
pseudogroups of local isometries}, J. Differential Geom. {\bf 37}
(1993), no. 3, 603--638.

\bibitem{BM} E. Baiada, M. Morse, \emph{Homotopy and homology related to the
 Schoenflies problem}, Annals of Mathematics {\bf 58} (1958),  142-145.

\bibitem{BPR} V. Belfi, E. Park, K. Richardson, \emph{A Hopf theorem for
foliations}, Differential Geom. Appl. {\bf 18} (2003), no. 3, 319--341.

\bibitem{B} J.M. Bismut, \emph{The Witten complex and the degenerate Morse inequalities}, J.
Differential Geometry {\bf 23} (1986), 207-240.

\bibitem{BZ} J.M. Bismut and W. Zhang, \emph{An extension of a theorem
by Cheeger and M\"{u}ller}, Ast\'erisque {\bf 205} (1992).

\bibitem{Bo} R. Bott, \emph{Nondegenerate critical manifolds},
Ann. of Math. {\bf 60} (1954) 248-261.

\bibitem{BR} M. Braverman \emph{New proof of the Cheeger-M\"{u}ller
theorem}, Proc. Amer. Math. Soc. {\bf 130} (2002), no. 4, 1095-1101.

\bibitem{BF1} M. Braverman, M. Farber, \emph{Novikov type inequalities for
differential forms with non-isolated zeros}, Math. Proc. Camb.
Phil. Soc. (1997) {\bf 122}, 357-375.

\bibitem{BF2} M. Braverman, M. Farber, \emph{Equivariant Novikov
inequalities}, K-Theory {\bf 12} (1997), no. 4, 293--318.

\bibitem{BS} M. Braverman, V. Silantyev, \emph{Kirwan-Novikov inequalities on a
manifold with boundary}, Trans. Amer. Math. Soc. {\bf 358} (2006), no.
8, 3329--3361 (electronic)

\bibitem{BFK} D. Burghelea, L. Friedlander, and T.
Kappeler,\emph{Asymptotic expansion of the Witten deformation of
the analytic torsion}, Journal of Funct. Anal. {\bf 137} (1996),
320-363.

\bibitem{BFKM} D. Burghelea, L. Friedlander, T. Kappeler, and P. McDonald,
\emph{Analytic and Reidemeister torsion for representations in
finite type Hilbert modules}, Geom. Funct. Anal.  {\bf 6} (1996), 751-859.

\bibitem{C}J. Cheeger, \emph{Analytic torsion and the heat equation},
Ann. of Math.  {\bf 109} (1979), 259-300.

\bibitem{CFKS} H.L. Cycon, R.G. Froese, W. Kirsch, B. Simon,
\emph{Schr$\ddot{o}$dinger operators with application to quastum
mechanics and global geometry}, Texts and Monographs in Physics,
Springer-Verlag, 1987.

\bibitem{CL} K. C. Chang, J. Liu, \emph{A cohomology complex for manifolds with
boundary}, Topological methods in nonlinear analysis. {\bf 5} (1995),
325- 340.

\bibitem{F} M. Farber, \emph{Topology of Closed One-Forms}, Mathematical Surveys
and Monographs, Volume 108, AMS 2004.

\bibitem{Fo1} R. Forman, \emph{Spectral sequences and adiabatic limits}, Commun.
Math. Phys. {\bf 168} (1995), no. 1, 57--116.

\bibitem{HN} B. Helffer, F. Nier,
\emph{Quantitative analysis of metastability in reversible
diffusion processes via a Witten complex approach: the case with
boundary}, Mem. Soc. Math. Fr. (N.S.)  No. 105  (2006), vi+89 pp.

\bibitem{HS} B. Helffer, J. Sj\"{o}strand, \emph{Puits multiples en
m$\acute{e}$canique semi-classique. IV. $\acute{E}$tude du
complexe du Witten}, Commun. in PDE {\bf 10} (1985), 245-340.

\bibitem{HS2} B. Helffer, J. Sj\"{o}strand,\emph{A proof of the Bott inequalities},
In M. Kashiwara and T.Kawai (Eds.), Algebraic Analysis, Academic
Press, Boston (1988), 171-183

\bibitem{K} F. C. Kirwan, \emph{Cohomology of quotients in symplectic and
 algebraic geometry}, Princeton Univ. Press, Princeton NJ, 1984.

\bibitem{KMS} Y. Kordyukov, V. Mathai, M. Shubin,
\emph{Equivalence of spectral projections in semiclassical limit
and a vanishing theorem for higher traces in K-theory}, J. Reine
Angew. Math. {\bf 581} (2005), 193-236.

\bibitem{MM} R. R. Mazzeo, R. B. Melrose, \emph{The adiabatic limit, Hodge
cohomology and Leray's spectral sequence for a fibration}, J.
Differential Geom. {\bf 31} (1990), no. 1, 185--213.

\bibitem{M} J. Milnor, \emph{Morse Theory}, Princeton University Press, Princeton,
1973.

\bibitem{Mu} W. M\"{u}ller, \emph{Analytic torsion and R-torsion on Riemannian
manifolds}, Adv. in Math. {\bf 28} (1978), 233-305.

\bibitem{Mu2} W. M\"{u}ller, \emph{Analytic torsion and R-torsion for unimodular
representation}, Jour. of AMS  {\bf 6} (1993), 721-753.

\bibitem{N} S.P. Novikov, \emph{The Hamiltonian formalism and a multivalued analogue
of Morse theory}, Russian Math. Surveys {\bf 37} (1982) 1-56.

\bibitem{Pa} A. V. Pazhitnov, \emph{On the sharpness of inequalities of
Novikov type for manifolds with a free abelian fundamental group},
Mat. Sb. {\bf 180} (1989), no. 11, 1486--1523, 1584; translation in
Math. USSR-Sb., {\bf 68} (1991), no. 2, 351--389.

\bibitem{P} I. Prokhorenkov, \emph{Morse-Bott functions and the Witten Laplacian},
Comm. Anal. Geom. {\bf 7} (1999), 841-918.

\bibitem{PR1} I. Prokhorenkov, K. Richardson, \emph{Perturbations of Dirac
operators}, J. Geom. Phys. {\bf 57} (2006), no. 1, 297--321.

\bibitem{PR2} I. Prokhorenkov, K. Richardson, \emph{Witten deformation and the
equivariant index}, arXiv preprint math/0610160

\bibitem{RaS} D. B. Ray and I. M. Singer, \emph{R-torsion and the Laplacian
on Riemannian manifolds}, Adv. in Math. {\bf 7} (1971).

\bibitem{RS} M. Reed and B. Simon, \emph{Functional Analysis}, Academic Press,
New York, 1980.

\bibitem{S}B. Simon, \emph{Semiclassical analysis of low lying eigenvalues. I.
Nondegenerate minima: asymptotic expansion.}, Ann. Inst. H.
Poincar\'e Sect. A(N.S.) {\bf 38} (1983), no. 3, 295-308.

\bibitem{S1}M. Shubin, \emph{Semiclassical asymptotics on covering manifolds and
Morse inequalities}, GAFA {\bf 6} (1996), 370-409.

\bibitem{S2} M. Shubin,
\emph{Novikov inequalities for vector fields}, The Gelfand
Mathematical Seminars, 1993-1995, 243-274, Gelfand Math. Sem.,
Birkh\"auser Boston, Boston, MA, 1996.

\bibitem{W} E. Witten, \emph{Supersymmetry and Morse theory}, J. Differential
Geometry {\bf 17} (1982) 661-692.

\end{thebibliography}
\end{document}